\DeclareMathOperator{\C}{\mathcal{C}}
\newtheorem{theorem}{Theorem}[section]
\newtheorem{lemma}[theorem]{Lemma}
\newtheorem{proposition}[theorem]{Proposition}
\newtheorem{result}[theorem]{Result}
\newcommand{\cG}{{\mathcal G}}
\newcommand{\cH}{{\mathcal H}}
\newcommand{\cC}{{\mathcal C}}
\newcommand{\cD}{{\mathcal D}}
\newcommand{\cL}{{\mathcal L}}
\newcommand{\PGaL}{{\mathrm P}\Gamma\mathrm{L}}
\newcommand{\N}{\mathrm{N}}
\newcommand{\F}{{\mathbb F}}
\newcommand{\la}{\langle}
\newcommand{\ra}{\rangle}
\newcommand{\PG}{\mathrm{PG}}
\newcommand{\ZG}{\mathcal{Z}(\mathrm{\Gamma L})}
\newcommand{\G}{\mathrm{\Gamma L}}
\title{Connections between scattered linear sets and MRD-codes}
\author{Olga Polverino and Ferdinando Zullo\thanks{
The
research  was supported by
 the Italian National
Group for Algebraic and Geometric Structures and their Applications (GNSAGA
- INdAM). The second author were also supported by the project ''VALERE: Vanvitelli pEr la RicErca" of the University of Campania ''Luigi Vanvitelli''. }}
\begin{document}
\maketitle

\begin{abstract}
The aim of this paper is to survey on the known results on maximum scattered linear sets and MRD-codes.
In particular, we investigate the link between these two areas. In \cite{Sheekey} Sheekey showed how maximum scattered linear sets of $\PG(1,q^n)$ define square MRD-codes.
Later in \cite{CSMPZ2016} maximum scattered linear sets in $\PG(r-1,q^n)$, $r>2$, were used to construct non square MRD-codes. Here, we point out a new relation regarding the other direction.
We also provide an alternative proof of the well-known Blokhuis-Lavrauw's bound for the rank of maximum scattered linear sets shown in \cite{BL2000}.
\end{abstract}

\section{Introduction}

Let $\Omega=\PG(V,\F_{q^n})=\PG(r-1,q^n)$, $q=p^h$, $p$ prime.
A set of points $L$ of $\Omega$ is called an $\F_q$-\emph{linear set} of $\Omega$ of rank $k$ if it consists of the points defined by the non-zero vectors of an $\F_q$-subspace $U$ of $V$ of dimension $k$, i.e.
\[ L=L_U=\{\langle \mathbf{u} \rangle_{\F_{q^n}} \colon \mathbf{u} \in U \setminus \{\mathbf{0}\}\}. \]

Linear sets are a generalization of subgeometries of projective spaces.
The term \emph{linear} has been used for the first time by Lunardon in \cite{Lu}, where he constructs special kind of blocking sets.
In recent years, linear sets have been intensively used to construct, classify or characterize many different objects like blocking sets, two-intersection sets, complete caps, translation spreads of the Cayley Generalized Hexagon, translation ovoids of polar spaces, semifield flocks, finite semifields and rank metric codes, see \cite{LVdV2015,Polverino,Sheekey} and the references therein.

\smallskip

Let $\Lambda=\PG(W,\F_{q^n})$ be a subspace of $\Omega$ and let $L_U$ be an $\F_q$-linear set of $\Omega$. Then $\Lambda \cap L_U=L_{W\cap U}$, and if $\dim_{\F_q} (W\cap U)=i$, i.e. if the $\F_q$-linear set $\Lambda \cap L_U=L_{W\cap U}$ has rank $i$, we say that $\Lambda$ has \emph{weight} $i$ in $L_U$, and we write $w_{L_U}(\Lambda)=i$.
Note that if $\Lambda$ has dimension $s$ and $L_U$ has rank $k$, then $0 \leq w_{L_U}(\Lambda) \leq \min\{k,n(s+1)\}$.
In particular, a point $P$ belongs to an $\F_q$-linear set $L_U$ if and only if $w_{L_U}(P)\geq 1$.
Also, we define the \emph{maximum field of linearity} of an $\F_q$-linear set $L_U$ as $\F_{q^\ell}$ if $\ell$ is the largest integer such that $\ell \mid n$ and $L_U$ is an $\F_{q^\ell}$-linear set.

\smallskip

One of the most natural questions about linear sets is their equivalence; especially in the applications it is crucial to have methods to establish whether two linear sets are equivalent or not.
Two linear sets $L_U$ and $L_W$ of $\Omega=\PG(r-1,q^n)=\PG(V,\F_{q^n})$ are said to be $\PGaL$-\emph{equivalent} (or simply \emph{projectively equivalent}) if there exists $\varphi \in \PGaL (r,q^n)$ such that $L_U^\varphi=\varphi(L_U)=L_W$.
If $U$ and $W$ are $\F_q$-subspaces of $V$ which are in the same $\G (r,q^n)$-orbit, then $L_U$ and $L_W$ are equivalent.
Indeed, if $f \in \G (r,q^n)$ and $f(U)=W$ then, denoting by $\varphi_f$ the semilinear collineation induced by $f$ (i.e. $\varphi_f(\langle \mathbf{u}\rangle_{\F_{q^n}})=\langle f(\mathbf{u})\rangle_{\F_{q^n}}$ ), then $\varphi_f(L_U)=L_{f(U)}=L_W$.
This is only a sufficient condition for the equivalence of two linear sets.
In general the $\G(r,q^n)$-orbit of an $\F_q$-subspace $U$ of $V$ does not determine the $\PGaL(r,q^n)$-orbit of the corresponding linear set $L_U$. If the $\G(r,q^n)$-orbit of $U$ completely determines the $\PGaL(r,q^n)$-orbit of $L_U$ we call $L_U$ \emph{simple}. More precisely, $L_U$ is a simple $\F_q$-linear set if for each $\F_q$-subspace $W$ of $V$ such that $\dim_{\F_q} U= \dim_{\F_q} W$ and $L_U=L_W$, the subspaces $U$ and $W$ are in the same $\G(r,q^n)$-orbit.
In \cite{CsMP} the authors investigated the equivalence problem between $\F_q$-linear sets of rank $n$ on the projective line $\PG(1,q^n)$.
The idea is to study first the $\Gamma\mathrm{L}(2,q^n)$-orbits of the subspaces defining the linear set and then to study the equivalence between two linear sets.
More precisely, they give the following definitions of $\mathcal{Z}(\Gamma\mathrm{L})$-class and $\Gamma\mathrm{L}$-class (see \cite[Definitions 2.4 \& 2.5]{CsMP}) of an $\F_q$-linear set of a line.

\smallskip

Let $L_U$ be an $\mathbb{F}_q-$linear set of $\PG(1,q^n)=\PG(V,\mathbb{F}_{q^n})$ of rank $n$ with maximum field of linearity $\mathbb{F}_q$.

We say that $L_U$ is of $\mathcal{Z}(\Gamma\mathrm{L})$-\emph{class} $r$ if $r$ is the greatest integer such that there exist $\F_q$-subspaces $U_1, U_2, \ldots, U_r$ of $V$ with $L_{U_i}=L_U$ for $i \in \{1,2,\ldots,r\}$ and $U_i \neq \lambda U_j$ for each $\lambda \in \F_{q^n}^*$ and for each $i \neq j$, $i, j \in \{1,2,\ldots,r\}$.
We say that $L_U$ is of $\Gamma\mathrm{L}$-\emph{class} $s$ if $s$ is the greatest integer such that there exist $\mathbb{F}_q$-subspaces $U_1,\ldots,U_s$ of $V$ with $L_{U_i}=L_U$ for $i \in \{1,\ldots,s\}$ and there is no $f \in \Gamma \mathrm{L}(2,q^n)$ such that $U_i=U_j^f$ for each $i\neq j$, $i,j \in \{1,2,\ldots,s\}$.

\smallskip

If $L_U$ is of $\Gamma \mathrm{L}$-class one, then $L_U$ is simple.
For $n\le4$, any linear set of rank $n$ in $\PG(1,q^n)$ is simple \cite[Theorem 4.5]{CsMP}.

The $\Gamma\mathrm{L}$-class of a linear set is a projective invariant (by \cite[Proposition 2.6]{CsMP}) and hence plays a crucial role in the study of linear sets up to equivalence.
See \cite{CsMP,CsMP2018,CsZ2016} for further details on the equivalence issue.

\smallskip

In Sections \ref{sec:scattered} and \ref{sec:MRD}, we will survey on maximum scattered linear sets and MRD-codes, dealing with links between them in Section \ref{sec:scatteredMRD}.
In \cite{CSMPZ2016}, the authors proved that starting from a maximum scattered linear set of $\PG(r-1,q^n)$ it is possible to construct a special type of MRD-code. Our main contribution is to prove the converse.
Also, we will provide an alternative proof of the Blokhuis-Lavrauw's bound for the rank of scattered linear sets.

\section{Scattered linear sets}\label{sec:scattered}

For an $\F_q$-linear set $L_U$ of rank $k$ in $\Omega=\PG(r-1,q^n)=\PG(V,\F_{q^n})$ the bound
\begin{equation}\label{cardlinearsets}
|L_U| \leq q^{k-1}+q^{k-2}+\ldots+1,
\end{equation}
holds true. Hence, $L_U$ is called \emph{scattered} if it achieves the bound \eqref{cardlinearsets},
or equivalently if all of its points have weight one. In this case, we will also say that $U$ is a {\em scattered subspace of $V$}.

A scattered $\F_q$--linear set $L_U$ of $\Omega$ with highest possible rank is a {\it maximum scattered $\F_q$--linear set} of $\Omega$ and $U$ is called a {\it maximum scattered subspace} of $V$; see \cite{BL2000}.
For a recent survey on the theory of scattered spaces in Galois Geometry and its applications see \cite{Lavrauw}.
Blokhuis and Lavrauw in \cite{BL2000} obtained the following result on the rank of a maximum scattered linear set.

\begin{theorem}\cite[Theorems 2.1 and 4.3]{BL2000}\label{boundscattered}
If $L_U$ is a maximum scattered $\F_q$-linear set of $\Omega=\PG(r-1,q^n)$ of rank $k$, then if $r$ is even
\[ k = \frac{rn}{2}, \]
otherwise
\[ \frac{rn-n}{2}\leq k \leq \frac{rn}{2}. \]
\end{theorem}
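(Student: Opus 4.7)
The plan is to split the statement into an upper bound on the rank of any scattered linear set, valid for both parities of $r$, and an existence statement producing scattered subspaces of the asserted rank in the even and odd cases.

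The first step is a useful reformulation: an $\F_q$-subspace $U$ of $V$ is scattered if and only if $U\cap\alpha U=\{0\}$ for every $\alpha\in\F_{q^n}\setminus\F_q$. One direction is immediate from the definition of weight; for the other, any nonzero $w=\alpha v\in U\cap\alpha U$ with $\alpha\notin\F_q$ produces two $\F_q$-linearly independent vectors $v,w$ of $U$ spanning the same projective point $\langle v\rangle_{\F_{q^n}}$, forcing its weight to be at least two. Once this is established, the upper bound is one line: $U$ and $\alpha U$ intersect trivially for any fixed $\alpha\in\F_{q^n}\setminus\F_q$, so $\dim_{\F_q}(U+\alpha U)=2k\leq\dim_{\F_q}V=rn$, yielding $k\leq rn/2$.

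For the existence I would use an explicit construction starting from $r=2$: the $\F_q$-subspace $U_0=\{(x,x^q):x\in\F_{q^n}\}$ of $\F_{q^n}^2$ has dimension $n$ and is scattered, since $\lambda(x,x^q)=(y,y^q)$ with $x\neq 0$ forces $\lambda^q=\lambda$, hence $\lambda\in\F_q$. For $r=2m$ even, take the external direct sum of $m$ copies of $U_0$ inside $(\F_{q^n}^2)^m\cong\F_{q^n}^r$. Scatteredness of this sum follows from the intersection criterion, since $U\cap\alpha U=\{0\}$ passes through direct sums componentwise, and the dimension is $mn=rn/2$. For $r=2m+1$ odd, pad the even-dimensional construction by a zero component to obtain a scattered subspace of $\F_{q^n}^r$ of dimension $(r-1)n/2=(rn-n)/2$.

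The main obstacle is the direct-sum step; arguing it projectively invites a tedious weight-by-weight analysis, but the intersection criterion $U\cap\alpha U=\{0\}$ reduces it to a trivial componentwise check. Note that this argument does not resolve whether the upper bound $rn/2$ is attained for odd $r$; it only traps the optimal rank inside the stated interval, consistent with the known fact that attainability of the upper endpoint for odd $r$ is delicate and depends on $n$ and $q$.
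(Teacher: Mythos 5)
Your argument is correct, but it takes a genuinely different route from the paper's. The paper's Subsection on the Blokhuis--Lavrauw bound deliberately derives the inequality $k\leq rn/2$ from the Singleton bound for rank metric codes: from a scattered $U$ of dimension $k$ it builds the code $\cC_{U,G}=\{G\circ\tau_{\mathbf{v}}\colon \mathbf{v}\in V\}$ with $\ker G=U$, checks that it is an $\F_q$-linear RM-code with parameters $(rn-k,n,q;n-1)$ and size $q^{rn}$, and reads off $rn\leq 2(rn-k)$ from \eqref{eq:SingletonBound}. You instead use the elementary criterion that $U$ is scattered if and only if $U\cap\alpha U=\{0\}$ for every $\alpha\in\F_{q^n}\setminus\F_q$ (your verification of both implications is sound), after which the bound is the one-line dimension count $\dim_{\F_q}(U\oplus\alpha U)=2k\leq rn$ for a single such $\alpha$, which exists since $n\geq 2$. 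Your proof is shorter and self-contained, whereas the paper's is designed precisely to exhibit the coding-theoretic connection that is the theme of the survey; neither coincides with the original double-counting proof of Blokhuis and Lavrauw that the paper alludes to. For the existence halves of the statement the paper proves nothing new and relies on the constructions quoted in Section~\ref{sec:scattered}: your direct sum of $r/2$ copies of $\{(x,x^q)\colon x\in\F_{q^n}\}$ is, up to a coordinate permutation, exactly Lavrauw's example cited there, and your componentwise check via the intersection criterion, together with the zero-padding for odd $r$, correctly yields the lower bound $(rn-n)/2$. You are also right that attainability of $rn/2$ for odd $r$ is not part of the statement and need not be addressed.
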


In Subsection \ref{BLbound} we will present a new proof of this bound based on the Singleton-like bound for rank metric codes.

\smallskip

If $r$ is even, explicit examples of scattered $\F_q$-linear sets of rank $\frac{rn}{2}$ can be found in \cite[Theorem 2.5.5]{LavrauwThesis}, where Lavrauw proves that the linear set defined by
\[ U=\{(x_1,x_2,\ldots,x_{r/2},x_1^q,x_2^q,\ldots,x_{r/2}^q) \colon x_i \in \F_{q^n}, i\in\{1,\ldots,r/2\}\}, \]
is a (maximum) scattered $\F_q$-linear set of rank $\frac{rn}{2}$ of $\PG(r-1,q^n)$.

\smallskip

A special class of maximum scattered linear sets when $r$ is even are those of pseudoregulus type.
They have been first introduced by Marino, Trombetti and the first author in \cite{MPT2007} and further generalized by Lavrauw and Van de Voorde in \cite{LVdV2013}; their name comes from the geometric construction of Freeman in \cite{Free}.

\smallskip

Let $L=L_U$ be a scattered $\F_q$-linear set of $\Omega=\PG(2t-1,q^n)$ of rank $tn$, $t,n \geq 2$.
We say that $L$ is of \emph{pseudoregulus type} if
\begin{enumerate}
  \item there exist $\displaystyle m=\frac{q^{nt}-1}{q^n-1}$ pairwise disjoint lines of $\Omega$, say $s_1,s_2,\ldots,s_m$ such that
        \[ \omega_L(s_i)=n \,\, \text{for each} \,\, i=1,\ldots,m; \]
  \item there exist exactly two $(t-1)$-dimensional subspaces $T_1$ and $T_2$ of $\Omega$ disjoint from $L$ such that $T_j \cap s_i \neq \emptyset$ for each $i=1,\ldots,m$ and $j=1,2$.
\end{enumerate}
The set of lines $\mathcal{P}_L$ is said the $\F_q$-\emph{pseudoregulus} (or simply \emph{pseudoregulus}) of $\Omega$ associated with $L$ and we call $T_1$ and $T_2$ the \emph{transversal spaces} of $\mathcal{P}_L$ (or \emph{transversal spaces} of $L$).

\smallskip

All maximum scattered linear sets of $\Omega=\PG(2t-1,q^3)$, with $t \geq 2$, are of pseudoregulus type and they are all equivalent under the action of the collineation group of $\Omega$ as proved in \cite[Theorem 5]{LVdV2010} for the case $t=1$, in \cite[Propositions 2.7 and 2.8]{MPT2007} for the case $t=2$ and \cite[Section 3 and Theorem 4]{LVdV2013} for the case $t\geq 3$. In \cite[Theorem 3.11]{LuMaPoTr2014}, the authors characterize the linear sets of pseudoregulus type in terms of the associated projected subgeometry and they also show how to construct them.

\begin{theorem}\cite[Theorem 3.5]{LuMaPoTr2014}
Let $T_1=\PG(U_1,\F_{q^n})$ and $T_2=\PG(U_2,\F_{q^n})$ be two disjoint $(t-1)$-subspaces of $\Omega=\PG(V,\F_{q^n})=\PG(2t-1,q^n)$, with $n>1$, and let $\phi_f$ be a semilinear collineation between $T_1$ and $T_2$ having as companion automorphism an element $\sigma \in \mathrm{Aut}(\F_{q^n})$ such that $\mathrm{Fix}(\sigma)=\F_q$.
Then, for each $\rho \in \F_{q^n}^*$, the set
\[ L_{\rho,f}=\{ \langle \mathbf{u}+\rho f(\mathbf{u}) \rangle_{\F_{q^n}} \colon \mathbf{u}\in U_1\setminus\{\mathbf{0}\} \} \]
is an $\F_q$-linear set of $\Omega$ of pseudoregulus type whose associated pseudoregulus is $\mathcal{P}_{L_{\rho,f}}=\{\langle P,P^{\phi_f}\rangle_{\F_{q^n}} \colon P \in T_1\}$, with transversal spaces $T_1$ and $T_2$.
\end{theorem}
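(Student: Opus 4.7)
The plan is to exhibit the $\F_q$-subspace defining $L_{\rho,f}$ explicitly and then verify, in order, that it has rank $tn$ and is scattered, and that the proposed lines together with $T_1,T_2$ satisfy the two conditions of the pseudoregulus definition.

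Set $W:=\{\mathbf{u}+\rho f(\mathbf{u}):\mathbf{u}\in U_1\}$. Since $V=U_1\oplus U_2$ and $\rho f$ maps $U_1$ into $U_2$, the assignment $\mathbf{u}\mapsto\mathbf{u}+\rho f(\mathbf{u})$ is $\F_q$-linear and injective, so $W$ is an $\F_q$-subspace of dimension $\dim_{\F_q}U_1=tn$ and $L_{\rho,f}=L_W$. To see scatteredness, suppose $\mathbf{u}'+\rho f(\mathbf{u}')=\lambda(\mathbf{u}+\rho f(\mathbf{u}))$ for some $\lambda\in\F_{q^n}^*$ with $\mathbf{u}\neq\mathbf{0}$. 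Projecting onto the two summands gives $\mathbf{u}'=\lambda\mathbf{u}$ and $f(\mathbf{u}')=\lambda f(\mathbf{u})$, while $\sigma$-semilinearity gives $f(\mathbf{u}')=\sigma(\lambda)f(\mathbf{u})$. As $f(\mathbf{u})\neq\mathbf{0}$, we get $\sigma(\lambda)=\lambda$, i.e.\ $\lambda\in\mathrm{Fix}(\sigma)=\F_q$. Hence every point of $L_{\rho,f}$ has weight~1.

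Next, for $P=\langle\mathbf{u}\rangle_{\F_{q^n}}\in T_1$ set $s_P:=\langle P,P^{\phi_f}\rangle_{\F_{q^n}}=\langle\mathbf{u},f(\mathbf{u})\rangle_{\F_{q^n}}$; this is a genuine line because $\mathbf{u}\in U_1$ and $f(\mathbf{u})\in U_2$ are $\F_{q^n}$-independent. Pairwise disjointness of the lines $\{s_P:P\in T_1\}$ is a direct-sum argument: a common vector $a\mathbf{u}+bf(\mathbf{u})=a'\mathbf{u}'+b'f(\mathbf{u}')$ splits into $a\mathbf{u}=a'\mathbf{u}'$ and $bf(\mathbf{u})=b'f(\mathbf{u}')$, and whichever pair is nonzero forces $\mathbf{u}$ and $\mathbf{u}'$ to be $\F_{q^n}$-proportional (using that $f$ is a $\sigma$-semilinear bijection $U_1\to U_2$), so $P=P'$. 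Thus there are $|T_1|=(q^{nt}-1)/(q^n-1)=m$ such lines. A short computation shows $W\cap s_P=\{a\mathbf{u}+\rho\sigma(a)f(\mathbf{u}):a\in\F_{q^n}\}$, an $\F_q$-subspace of dimension $n$, so $s_P$ has weight $n$ in $L_{\rho,f}$. Condition~(1) is now established, and the disjointness of $T_1,T_2$ from $L_{\rho,f}$ follows from $U_1\cap U_2=\{\mathbf{0}\}$ (e.g.\ $\mathbf{u}+\rho f(\mathbf{u})\in U_1$ would force $\rho f(\mathbf{u})\in U_1\cap U_2=\{\mathbf{0}\}$ and thus $\mathbf{u}=\mathbf{0}$), while each $s_P$ clearly meets $T_1$ in $P$ and $T_2$ in $P^{\phi_f}$.

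The main obstacle is the \emph{exactness} requirement in condition~(2), i.e.\ ruling out a third $(t-1)$-dimensional transversal disjoint from $L_{\rho,f}$. On each line $s_P$ there are $q^n+1-(q^n-1)/(q-1)$ points outside $L_{\rho,f}$, so the uniqueness is not tautological. The plan is to analyse a hypothetical alternative transversal $T$ line by line: the intersection point $T\cap s_P$ has the form $\langle \alpha_P\mathbf{u}+\beta_P f(\mathbf{u})\rangle_{\F_{q^n}}$, and the requirement that $T$ be a single $(t-1)$-subspace of $\Omega$ forces the coefficients $(\alpha_P:\beta_P)$ to vary $\F_{q^n}$-linearly with $P$; combined with the disjointness from $L_{\rho,f}$, which via the $\sigma$-twisted coordinates $a\mapsto\rho\sigma(a)$ excludes all ``twisted'' sections, this leaves only the two constant choices $(\alpha_P:\beta_P)=(1:0)$ and $(0:1)$, recovering $T_1$ and $T_2$. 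Alternatively, projective invariance reduces the statement to the canonical model $U=\{(\mathbf{x},\rho\mathbf{x}^{\sigma}):\mathbf{x}\in\F_{q^n}^t\}$, where the uniqueness becomes a routine coordinate verification.
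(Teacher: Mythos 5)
This statement is quoted in the paper from \cite[Theorem 3.5]{LuMaPoTr2014} without proof, so there is no in-paper argument to compare against; judging your attempt on its own terms, the first part is sound but the last part is not a proof. Concretely: your verification that $W=\{\mathbf{u}+\rho f(\mathbf{u})\colon \mathbf{u}\in U_1\}$ is an $\F_q$-subspace of dimension $tn$ with $L_W=L_{\rho,f}$, that $L_{\rho,f}$ is scattered (via $\sigma(\lambda)=\lambda\Rightarrow\lambda\in\F_q$), that the $m$ lines $s_P=\langle P,P^{\phi_f}\rangle_{\F_{q^n}}$ are pairwise disjoint with $W\cap s_P=\{a\mathbf{u}+\rho\sigma(a)f(\mathbf{u})\colon a\in\F_{q^n}\}$ of weight $n$, and that $T_1,T_2$ are transversals disjoint from $L_{\rho,f}$, is correct and complete.

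The genuine gap is exactly where you flag it: the word \emph{exactly} in condition (2). You assert that requiring a third transversal $T$ to be a $(t-1)$-subspace ``forces the coefficients $(\alpha_P:\beta_P)$ to vary $\F_{q^n}$-linearly with $P$,'' but this is not justified and is not even a precise statement: a priori the slope map $P\mapsto(\alpha_P:\beta_P)$ is just some function from $T_1$ to $\PG(1,q^n)$, and turning the condition ``the $(q^{nt}-1)/(q^n-1)$ points $\langle\alpha_P\mathbf{u}+\beta_P f(\mathbf{u})\rangle$ form an $\F_{q^n}$-subspace'' into a constraint on that function is precisely the nontrivial content of the theorem. Note also that disjointness from $L_{\rho,f}$ alone only removes $(q^n-1)/(q-1)$ of the $q^n+1$ points on each line, and the remaining points on $\bigcup_P s_P\setminus(T_1\cup T_2)$ are partitioned by the $q-1$ other linear sets $L_{\rho',f}$ (for $\rho'$ ranging over coset representatives of $\{\sigma(\lambda)/\lambda\}$ in $\F_{q^n}^*$), from which $T$ is \emph{not} assumed disjoint; so the ``twisted sections'' are not excluded by hypothesis and must be ruled out by an actual argument. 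Your fallback (``reduce to the canonical model, where uniqueness is a routine coordinate verification'') merely restates the problem in coordinates: classifying the $(t-1)$-subspaces contained in the variety $\bigcup_P s_P$ that meet every $s_P$ is the same question. To complete the proof you would need to show, e.g., that a $(t-1)$-subspace contained in $\bigcup_P s_P$ and meeting every line induces an $\F_{q^n}$-semilinear correspondence $T_1\to T$ whose companion automorphism must be trivial unless $T\in\{T_1,T_2\}$, which is the actual work done in \cite{LuMaPoTr2014}.
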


In \cite[Theorem 3.12]{LuMaPoTr2014} the authors prove that linear sets of pseudoregulus type in $\Omega$ can be written as $L_{\rho,f}$ in previous theorem.
In case of the projective line, i.e. $\Omega=\PG(1,q^n)$, it has been proved in \cite[Section 4]{LuMaPoTr2014} and in \cite[Remark 2.2]{DD2014} that all the linear sets of pseudoregulus type in $\PG(1,q^n)$ are $\mathrm{PGL}(2,q^n)$-equivalent and hence we can define
them as all the linear sets which are $\mathrm{PGL}(2,q^n)$-equivalent to
\[ L=\{\langle (x,x^q) \rangle_{\F_{q^n}} \colon x \in \F_{q^n}^*\}. \]
Every linear set can be obtained as a projection of a suitable subgeometry of a suitable subspace as vertex, see \cite[Theorems 1 and 2]{LP:04}.
The projecting configurations which gives the linear set $L$ were described geometrically by Csajb\'ok and Zanella in \cite{CsZ20162}.

\smallskip

In the $r$ odd case the situation is slightly more complicated.
For $n=2$, Baer subgeometries $\PG(r-1,q)$ of $\PG(r-1,q^2)$ define scattered linear sets of rank $r$ for each $q$ and $r$, so they attain Blokhuis-Lavrauw's bound.
For $r=3$ and $n=4$, Ball, Blokhuis and Lavrauw in \cite[Theorem 3.1]{BBL2000} (see also \cite[Theorem 2.7.9]{LavrauwThesis}) prove that there exist $\alpha,\beta \in \F_{q^{12}}$ (without giving explicit conditions on $\alpha$ and $\beta$) such that the $\F_q$-subspace of $\F_{q^{12}}$ represented by the equation
\[ x^{q^6}+\alpha x^{q^3}+\beta x=0 \]
defines a scattered $\F_q$-linear set of rank $6$ in $\PG(2,q^4)$, so the bound is attained again.

Existence results have been proved for $n-1 \leq r$, $n$ even and $q>2$ in \cite[Theorem 4.4]{BL2000}. Explicit constructions of scattered linear sets of rank $rn/2$ in $\PG(r-1,q^n)$, $r$ odd, $n$ even, have been shown by Bartoli, Giulietti, Marino and the first author of this paper in \cite[Theorem 1.2]{BGMP2015} for infinitely many values of $r$, $n$ and $q$.
More precisely, they first find the following three families \cite[Theorems 2.2, 2.3 and 2.10]{BGMP2015} of maximum scattered $\F_q$-linear sets.
Let $n=2t$ and let $\PG(r-1,q^{2t})=\PG(\F_{q^{2rt}},\F_{q^{2t}})$.

\begin{itemize}
  \item Let $\omega$ be an element of $\F_{q^{2t}}\setminus \F_{q^t}$, then $\F_{q^{rt}}(\omega)=\F_{q^{2rt}}$. For any prime power $q\geq 2$ and any integer $t \geq 2$ with $\gcd(t,r)=1$, the $\F_q$-linear set defined by
      \[\{ax^{q^i}+x\omega \colon x \in \F_{q^{rt}}\}\]
      and satisfying the assumptions $\gcd(i,2t)=1$, $\gcd(i,rt)=r$ and $\N_{q^{rt}/q^r}(a) \notin \F_q$ is a scattered $\F_{q}$-linear set of $\PG(r-1,q^{2t})$ of rank $\displaystyle rt=\frac{rn}2$.
  \item Let $\omega$ be an element of $\F_{q^{2t}}\setminus \F_{q^t}$. For any prime power $q\equiv 1 \pmod{r}$ and any integer $t\geq 2$, the $\F_q$-linear set defined by
      \[\{ax^{q^i}+x\omega\colon x \in \F_{q^{rt}}\}\]
      and satisfying the assumptions $\gcd(i,2t)=\gcd(i,rt)=1$ and $(\N_{q^{rt}/q}(a))^{\frac{q-1}{r}}\neq 1$ is a scattered $\F_q$-linear set of $\PG(r-1,q^{2t})$ of rank $\displaystyle rt=\frac{rn}2$.
  \item Now, let $r=3$. For each integer $t\geq 2$, the $\F_2$-linear set defined by
        \[\{x^2+bx^{2^{2t+1}}+x\omega \colon x \in \F_{2^{3t}}\}\]
        with $b \in \F_{2^{3t}}^*$, $\N_{2^{3t}/2^t}(b)\neq 1$ and such that $x+bx^{2^{2t+1}-1}\notin\F_{2^t}$ for each $x \in \F_{2^{3t}}^*$, is a scattered $\F_2$-linear set of $\PG(2,2^{2t})$ of rank $\displaystyle 3t=\frac{rn}{2}$.
\end{itemize}

Therefore, by using \cite[Theorem 3.1]{BGMP2015} and by decomposing $V$ as a direct sum of a fixed number of $s$-subspaces, one can get the following result.

\begin{theorem}\cite[Theorem 1.2]{BGMP2015}\label{maxscattpar}
There exist examples of maximum scattered $\F_q$-linear sets in $\PG(r-1,q^n)$ of rank $\displaystyle \frac{rn}{2}$ in the following cases:
\begin{enumerate}
  \item $q=2$, for each odd $r\geq 3$ and even $n\geq 4$,
  \item for each $q\geq 2$, odd $r\geq 3$ and even $n\geq 4$ such that $\gcd(n,s)=1$, for some odd $s$ with $3 \leq s \leq n$,
  \item for each odd $r\geq 3$, even $n\geq 4$ and $q \equiv 1 \pmod{s}$ with $s$ odd and $3\leq s \leq n$.
\end{enumerate}
\end{theorem}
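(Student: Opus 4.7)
The plan is to assemble each case by combining one of the three bulleted families with a direct-sum glueing that inflates scattered subspaces to higher-dimensional ambient spaces. The key auxiliary fact, which is Theorem 3.1 of \cite{BGMP2015}, is the following direct-sum lemma: if $V=V_1\oplus V_2$ as $\F_{q^n}$-spaces and $U_i\subseteq V_i$ is an $\F_q$-scattered subspace of rank $\dim_{\F_{q^n}}(V_i)\cdot n/2$ for $i=1,2$, then $U_1\oplus U_2$ is scattered in $V$ of rank $\dim_{\F_{q^n}}(V)\cdot n/2$. To see this, one picks $v_1+v_2\in V_1\oplus V_2$, $v_i\in V_i$, and checks that the $\F_q$-weight of $\la v_1+v_2\ra_{\F_{q^n}}$ in $L_{U_1\oplus U_2}$ equals
\[
\dim_{\F_q}\bigl\{\lambda\in\F_{q^n} : \lambda v_1\in U_1 \text{ and } \lambda v_2\in U_2\bigr\},
\]
and this is bounded above by the weight of $\la v_i\ra$ in $L_{U_i}$ for whichever $v_i$ is nonzero, hence by $1$.

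With this lemma in hand, for each of the three cases one fixes a suitable odd $s$ and uses one of the three bulleted constructions to produce a maximum scattered $\F_q$-subspace $U_0$ of rank $sn/2$ inside an $s$-dimensional $\F_{q^n}$-space $V_0$. Writing $r=s+2m$ (possible since $r$ and $s$ are both odd and $r\geq s$), one sets $V=V_0\oplus W$ with $\dim_{\F_{q^n}}W=2m$, and equips $W\cong\F_{q^n}^{2m}$ with the classical Lavrauw subspace
\[
\{(x_1,\ldots,x_m,x_1^q,\ldots,x_m^q):x_i\in\F_{q^n}\},
\]
which is scattered of rank $mn$. The direct sum is then the desired scattered $\F_q$-subspace of rank $sn/2+mn=rn/2$ in $V$. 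The three cases correspond to three different choices of base: (1) for $q=2$ pick $s=3$ and apply the third bullet (valid for every even $n=2t\geq4$, after choosing an admissible $b\in\F_{2^{3t}}^*$); (2) pick any odd $s$ with $3\leq s\leq n$ and $\gcd(n,s)=1$ and apply the first bullet with the role of $r$ played by $s$ and with $t=n/2$, noting that $\gcd(t,s)=1$ follows from $\gcd(n,s)=1$ because $s$ is odd; (3) pick any odd $s$ with $3\leq s\leq n$ and $q\equiv1\pmod s$ and apply the second bullet with $r$ replaced by $s$.

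The principal technical obstacle is not the glueing step but the verification that each of the three base constructions is actually scattered of the claimed rank. This is the content of Theorems 2.2, 2.3 and 2.10 of \cite{BGMP2015}, and it reduces to showing that the $\F_{q^n}$-scalar stabilizer of any nonzero element of the displayed $\F_q$-subspace of $\F_{q^{sn}}$ has $\F_q$-dimension exactly one; this is a delicate analysis of $q$-polynomials over $\F_{q^{sn}}$, using the gcd conditions on $i$, the norm conditions on $a$ (or $b$), and the hypothesis $\mathrm{Fix}(x\mapsto x^{q^i})\cap\F_{q^{sn}}=\F_{q^{\gcd(i,sn)}}$ to control solutions of the relevant linearized equations. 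Taking these base results as given, the direct-sum argument above then yields the theorem in all three cases.
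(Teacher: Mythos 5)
Your proposal is correct and follows essentially the same route the paper indicates in its one-sentence justification: invoke the direct-sum (gluing) result \cite[Theorem 3.1]{BGMP2015} together with the three base families of \cite[Theorems 2.2, 2.3, 2.10]{BGMP2015}, decomposing $V(r,q^n)$ into one $s$-dimensional piece carrying the base construction and an even-dimensional piece carrying Lavrauw's subspace; your verification of the gluing lemma and your matching of the three cases to the three bullets are both accurate. The one point to flag is your unexplained assumption $r\geq s$: the decomposition $r=s+2m$ needs $s\leq r$, whereas the statement as printed allows $3\leq s\leq n$ (under which condition 2 would be vacuously satisfied for every even $n\geq 4$ by taking $s=n-1$, contradicting the paper's subsequent discussion of the ``missing case''), so the bound is evidently a transcription slip for $3\leq s\leq r$ and your implicit assumption is the intended one.
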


The previous result does not cover all cases. The only missing case is when $n$ is even, $r$ is odd, $6 \mid n$ and $q>2$, $q\not\equiv 1\pmod 3$. The authors in \cite{CSMPZ2016} construct $\F_q$-linear sets of rank $3n/2$ of $\PG(2,q^n)$, $n$ even, proving the sharpness of the bound also in the remaining open cases. The construction relies on the existence of non-scattered linear sets of rank $3t$ of $\PG(1,q^{3t})$ (with $t=n/2$) defined by a well-chosen binomial polynomial.
More precisely, they prove the following.

\begin{theorem}\cite[Theorem 2.4]{CSMPZ2016}
Let $\omega \in \F_{q^{2t}}\setminus \F_{q^t}$. For any prime power $q$ and any integer $t\geq 2$, there exist $a, b \in \F_{q^{3t}}^*$ and an integer $1 \leq i \leq 3t-1$ such that $\gcd(i,2t)=1$ and the $\F_q$-linear set $L_U$ of rank $3t$ of the projective plane $\PG(\F_{q^{6t}},\F_{q^{2t}})=\PG(2,q^{2t})$, where
\[U=\{ ax^{q^i}+bx^{q^{2t+i}}+\omega x \colon x \in \F_{q^{3t}}\},\]
is a scattered linear set.
\end{theorem}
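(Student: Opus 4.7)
The plan is to translate scatteredness of $L_U$ into an algebraic condition on $(a,b,i)$, analyse it in two cases, and then select $(a,b,i)$ by a combination of counting and an explicit construction using a non-scattered binomial on $\PG(1,q^{3t})$.

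\emph{Setup and reduction.} Since $\omega\in\F_{q^{2t}}\setminus\F_{q^t}$ and $\F_{q^{3t}}\cap\F_{q^{2t}}=\F_{q^t}$, the element $\omega$ does not lie in $\F_{q^{3t}}$, so $\{1,\omega\}$ is simultaneously an $\F_{q^{3t}}$-basis of $\F_{q^{6t}}$ and an $\F_{q^t}$-basis of $\F_{q^{2t}}$. Writing $\omega^2=c+d\omega$ with $c,d\in\F_{q^t}$, $\psi(x)=ax^{q^i}+bx^{q^{2t+i}}$ and $\Psi(x)=\omega x+\psi(x)$, one checks using the decomposition $\F_{q^{6t}}=\F_{q^{3t}}\oplus\omega\F_{q^{3t}}$ that $\Psi$ is $\F_q$-injective, so $U=\Psi(\F_{q^{3t}})$ has $\F_q$-dimension $3t$. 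Recalling that $L_U$ is maximum scattered iff $\lambda U\cap U=\{0\}$ for every $\lambda\in\F_{q^{2t}}\setminus\F_q$, I would parametrise $\lambda=\mu_0+\mu_1\omega$ with $\mu_0,\mu_1\in\F_{q^t}$ and expand $\Psi(y)=\lambda\Psi(x)$ componentwise. Using that $\alpha^{q^{2t+i}}=\alpha^{q^i}$ for $\alpha\in\F_{q^t}$, hence $\psi(\alpha z)=\alpha^{q^i}\psi(z)$, the two component equations collapse into
\[
\mu_1^{q^i}\psi^2(x)+\bigl[(\mu_0+\mu_1 d)^{q^i}-\mu_0\bigr]\psi(x)-\mu_1 c\,x=0, \qquad(\star)
\]
and scatteredness becomes the requirement that $(\star)$ has only $x=0$ as a solution for every $(\mu_0,\mu_1)\in\F_{q^t}^2$ with $\mu_0+\mu_1\omega\notin\F_q$.

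\emph{Case analysis.} When $\mu_1=0$ and $\mu_0\in\F_{q^t}\setminus\F_q$, $(\star)$ reduces to $(\mu_0^{q^i}-\mu_0)\psi(x)=0$; since $\gcd(i,2t)=1$ forces $\gcd(i,t)=1$, one has $\mu_0^{q^i}\neq\mu_0$, so the requirement is simply injectivity of $\psi$ on $\F_{q^{3t}}$. When $\mu_1\neq 0$, $(\star)$ becomes a linearised polynomial $\psi^2(x)+A\psi(x)+Bx=0$ of bounded $q$-degree, with $A,B$ depending explicitly on $\mu_0,\mu_1,c,d$, and the requirement is that its $\F_{q^{3t}}$-kernel be trivial for every such $(A,B)$.

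\emph{Existence and main obstacle.} Fix $i=1$ so that $\gcd(i,2t)=1$ automatically. The set of ``bad'' pairs $(a,b)\in(\F_{q^{3t}}^*)^2$ that either violate injectivity in the first case or produce a non-trivial root in $(\star)$ for some $(\mu_0,\mu_1)\in\F_{q^t}\times\F_{q^t}^*$ is contained in a union of at most $q^{2t}$ proper algebraic subsets of $(\F_{q^{3t}})^2$, so a dimension/counting argument yields admissible $(a,b)$ whenever $q^t$ is sufficiently large. The main obstacle, and the heart of the contribution in \cite{CSMPZ2016}, is to exhibit such $(a,b)$ uniformly in $q$ and $t$: one selects the specific binomial $\psi$ of rank $3t$ in $\PG(1,q^{3t})$ alluded to before the statement, whose prescribed non-scattered defect encodes precisely the cancellation needed in $(\star)$ to certify triviality of the kernel of $\psi^2+A\psi+B\cdot\mathrm{id}$ for all admissible parameters, thereby forcing $x=0$ in the residual cases that the counting argument does not cover.
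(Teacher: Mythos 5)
First, a point of reference: this survey only states the theorem as a citation of \cite{CSMPZ2016} and contains no proof of it, so there is no in-paper argument to compare yours against; I am judging the proposal on its own. Your reduction is correct and is in the spirit of the cited source: the decomposition $\F_{q^{6t}}=\F_{q^{3t}}\oplus\omega\F_{q^{3t}}$, the criterion that $L_U$ is scattered iff $\lambda U\cap U=\{0\}$ for all $\lambda\in\F_{q^{2t}}\setminus\F_q$, the semilinearity $\psi(\alpha z)=\alpha^{q^i}\psi(z)$ for $\alpha\in\F_{q^t}$, and the resulting equation $(\star)$ all check out, as does the observation that the case $\mu_1=0$ amounts to injectivity of $\psi$.

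The gap is the existence step, which is the entire content of the theorem. Two concrete problems. First, the counting argument does not work as stated: for a fixed pair $(\mu_0,\mu_1)$, the locus of $(a,b)$ for which $(\star)$ has a nontrivial root is the vanishing of the Dickson determinant of a $q$-polynomial whose coefficients are quadratic expressions in the conjugates $a^{q^j},b^{q^j}$; this determinant has degree of order $q^{3t}$ in $(a,b)$, so ``proper algebraic subset'' gives a bound of roughly $q^{3t}\cdot q^{3t}$ points -- already comparable to the whole of $(\F_{q^{3t}})^2$ for a \emph{single} $(\mu_0,\mu_1)$, before taking the union over the $q^{2t}$ admissible pairs. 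Even granting a repaired count, ``$q^t$ sufficiently large'' does not prove a statement asserted for every prime power $q$ and every $t\geq 2$. Second, in the ``residual cases'' you stop exactly where the theorem begins: asserting that one ``selects the specific binomial whose prescribed non-scattered defect encodes precisely the cancellation needed in $(\star)$'' names the result rather than proving it. A complete argument must exhibit explicit conditions on $a$, $b$ and $i$ (in \cite{CSMPZ2016} these involve norms $\N_{q^{3t}/q^t}$ of $a$ and $b$ and arithmetic conditions on $i$), show that they force the kernel of $\psi^2+A\psi+B\cdot\mathrm{id}$ to be trivial for every admissible $(A,B)$, and show that such $a,b,i$ exist for all $q$ and $t$. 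Be careful, too, about fixing $i=1$: the statement deliberately keeps $i$ as a free parameter, and the related constructions in \cite{BGMP2015} require conditions such as $\gcd(i,3t)=3$ that are incompatible with $i=1$, so discarding that freedom at the outset is likely fatal in some of the cases you must cover.
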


As a consequence, for any integers $r, n\geq 2$, $rn$ even, and for any prime power $q\geq 2$ the rank of a maximum scattered $\F_q$-linear set of $\PG(r-1,q^n)$ is $rn/2$.

\smallskip

The projective line case attracted a lot of attention, especially because of its connection with MRD-codes that we will explore in the next section.

If the point $\langle (0,1) \rangle_{\F_{q^n}}$ is not contained in the linear set $L_U$ of rank $n$ of $\PG(1,q^n)$ (which we can always assume after a suitable projectivity), then $U=U_f:=\{(x,f(x))\colon x\in \F_{q^n}\}$ for some $q$-polynomial
$\displaystyle f(x)=\sum_{i=0}^{n-1}a_ix^{q^i}\in \F_{q^n}[x]$. In this case we will denote the associated linear set by $L_f$.

Up to $\Gamma\mathrm{L}(2,q^n)$-equivalence, the known non-equivalent maximum scattered $\F_q$-subspaces of $\F_{q^n}^2$ are
\begin{itemize}
\item $U_1:= \{(x,x^{q^s}) \colon x\in \F_{q^n}\}$, $1\leq s\leq n-1$, $\gcd(s,n)=1$, found in \cite{BL2000};
\item $U_2:= \{(x,\delta x^{q^s} + x^{q^{n-s}})\colon x\in \F_{q^n}\}$, $n\geq 4$, $\N_{q^n/q}(\delta)\notin \{0,1\}$, $q\neq 2$, $\gcd(s,n)=1$, found in \cite{LunPol2001} for $s=1$ and in \cite{Sheekey} for $s\neq 1$;
\item $U_3:= \{(x,\delta x^{q^s}+x^{q^{s+n/2}})\colon x\in \F_{q^{n}}\}$, $n\in \{6,8\}$, $\gcd(s,n/2)=1$, $\N_{q^n/q^{n/2}}(\delta) \notin \{0,1\}$, with some conditions on $\delta$ and $q$, found in \cite{CMPZ};
\item $U_{4}:=\{(x, x^q+x^{q^3}+\delta x^{q^5}) \colon x \in \F_{q^6}\}$, $q$ odd and $\delta^2+\delta=1$, see \cite{CsMZ2018} for $q\equiv 0,\pm1 \pmod{5}$, and \cite{MMZ} for the remaining congruences of $q$;
\item $U_5:=\{(x, h^{q-1}x^q-h^{q^2-1}x^{q^2}+x^{q^4}+x^{q^5}) \colon x \in \F_{q^6}\}$, $h\in\F_{q^6}$, $h^{q^3+1}=-1$ and $q$ odd, \cite{BZZ,ZZ}.
\end{itemize}

Note that $L_{U_1}$ is the linear set of pseudoregulus type in $\PG(1,q^n)$. Its $\mathcal{Z}(\Gamma\mathrm{L})$-class and its $\Gamma\mathrm{L}$-class are known.
Denote by $\varphi$ the Euler phi function. By determining the transversal spaces of the associated variety, Lavrauw, Sheekey and Zanella in \cite{LSZ} proved that its $\ZG$-class is $\varphi(n)$.
Csajb\'ok and Zanella in \cite{CsZ2016} proved that its $\Gamma\mathrm{L}$-class is $\frac{\varphi(n)}2$.
In particular, this means that to check the $\mathrm{PGL}(2,q^n)$-equivalence between an $\F_q$-linear set $L_U$ and $L_{U_1}$ in $\PG(1,q^n)$, we have to check whether $U$ is $\mathcal{Z}\mathrm{(GL)}(2,q^n)$-equivalent with one of the $\varphi(n)$ subspaces of the form $U_1$, where $\displaystyle \mathcal{Z}\mathrm{(GL)}(2,q^n)$ is the centre of $\mathrm{GL}(2,q^n)$, while to check the $\mathrm{P}\Gamma\mathrm{L}(2,q^n)$-equivalence between $L_U$ and $L_{U_1}$, we have to check whether $U$ is $\Gamma\mathrm{L}(2,q^n)$-equivalent with one of the $\displaystyle \frac{\varphi(n)}2$ subspaces of the form $U_1$, since the subspaces $\{(x,x^{q^s}) \colon x \in \F_{q^n}\}$ and $\{(x,x^{q^{n-s}}) \colon x \in \F_{q^n}\}$ are $\Gamma\mathrm{L}(2,q^n)$-equivalent.
In \cite[Theorem 3]{LunPol2001} it is proved that $L_{U_2}$ and $L_{U_1}$ are not $\mathrm{P}\Gamma \mathrm{L}(2,q^n)$-equivalent when $q>3$, $n\geq 4$ and $\delta \neq 0$.

\smallskip

More recently, extending the definition given in \cite{ShVdV}, in \cite{CsMPZ2019}, jointly with Csajb\'ok and Marino, we introduced the family of $h$-scattered linear sets.
A linear set $L_U$ of $\PG(r-1,q^n)=\PG(V,\F_{q^n})$ is called a $h$-\emph{scattered linear set} if $\langle L_U \rangle= \PG(r-1,q^n)$ and for each $(h-1)$-subspace $\Omega$ of $\PG(r-1,q^n)$ we have
\[ w_{L_{U}}(\Omega) \leq h. \]
In this case, we also say that $U$ is an \emph{$h$-scattered $\F_q$-subspace} of $V$.
When $h=r-1$ we obtain the scattered linear sets w.r.t. hyperplanes introduced by Sheekey and Van de Voorde in \cite{ShVdV}.
We prove that the rank $k$ of an $h$-scattered linear set of $\PG(r-1,q^n)$, if $k>r$, is at most $\frac{rn}{h+1}$, otherwise it is a canonical subgeometry of $\PG(r-1,q^n)$, generalizing the bound of Blokhuis and Lavrauw.
An $h$-scattered linear set $L_U$ of $\PG(r-1,q^n)=\PG(V,\F_{q^n})$ is called \emph{maximum $h$-scattered} (and $U$ is called \emph{maximum $h$-scattered $\F_q$-subspace of }$V$) if its rank is $\frac{rn}{h+1}$.
Also, we determine the spectrum of the weights of the hyperplanes of $\PG(r-1,q^n)$ w.r.t. a maximum $h$-scattered linear set, which, together with a new type of duality for linear sets, bring us to prove the existence of maximum $h$-scattered linear sets in $\PG(r-1,q^n)$ with some conditions on $r,n$ and $h$. It is currently an open question whether for each $r,n$ and $h$ such that $h+1\mid rn$ there exists a maximum $h$-scattered linear set in $\PG(r-1,q^n)$.

\section{MRD-codes}\label{sec:MRD}

In 1978 Delsarte in \cite{Delsarte} introduced rank metric codes as
$q$-analogs of the usual linear error correcting codes
endowed with Hamming distance.
He studied rank metric codes in terms of bilinear forms on two finite-dimensional vector spaces over a finite fields and he called \emph{Singleton systems} those known as maximum rank distance codes.
The set of $m \times n$ matrices $\F_q^{m\times n}$ over $\F_q$ is a rank metric $\F_q$-space
with rank metric distance defined by
\[d(A,B) = \mathrm{rk}\,(A-B)\]
for $A,B \in \F_q^{m\times n}$.
A subset $\C \subseteq \F_q^{m\times n}$ is called a \emph{rank metric code} (RM-code for short). The minimum distance of $\C$ is
\[d(\C) = \min_{{A,B \in \C},\ {A\neq B}} \{ d(A,B) \}.\]
Also, we say that $\C$ has parameters $(n,m,q;d)$, where $d$ is the minimum distance of $\C$.
When $\C$ is an $\F_q$-linear subspace of $\F_q^{m\times n}$, we say that $\C$ is an $\F_q$-\emph{linear} RM-code and its dimension $\dim_{\F_q}\C$ is defined to be the dimension of $\C$ as a subspace over $\F_q$.
In the same paper, Delsarte also showed that the
parameters of these codes must obey a Singleton-like bound:
let $\C$ be an RM-code of $\F_q^{m\times n}$ and let $d$ be its minimum distance, then
\begin{equation}\label{eq:SingletonBound}
|\C| \leq q^{\max\{m,n\}(\min\{m,n\}-d+1)}.
\end{equation}

When equality holds, we call $\C$ \emph{maximum rank distance} (\emph{MRD} for short) code.
Examples of MRD-codes were first found by Delsarte in \cite{Delsarte} and rediscovered by Gabidulin in \cite{Gabidulin}; we will show the known constructions in Subsection \ref{sec:exMRD}.

\medskip

Let $\mathcal{C}\subseteq \F_{q}^{m \times n}$ be a rank metric code, the \emph{adjoint code} of $\C$ is
\[ \C^\top =\{C^t \colon C \in \C\}. \]

Define the symmetric bilinear form $\langle\cdot,\cdot\rangle$ on $\F_q^{m \times n}$ by
\[ \langle M,N \rangle= \mathrm{Tr}(MN^t). \]
The \emph{Delsarte dual code} of an $\F_q$-linear RM-code $\C$ is
\[ \C^\perp = \{ N \in \F_q^{m\times n} \colon \langle M,N \rangle=0 \, \text{for each} \, M \in \C \}. \]

By using the machinery of association schemes, Delsarte in \cite{Delsarte} proved the following result.

\begin{lemma}\label{dualMRD}\cite[Theorem 5.5]{Delsarte}
Let $\C\subseteq \F_{q}^{m \times n}$ be an $\F_q$-linear MRD-code of dimension $k$ with $d>1$. Then the Delsarte dual code $\C^\perp\subseteq \F_{q}^{m\times n}$ is an MRD-code of dimension $mn-k$.
\end{lemma}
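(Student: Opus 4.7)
The non-degeneracy of the symmetric $\F_q$-bilinear form $\langle\cdot,\cdot\rangle$ (the Frobenius trace pairing) immediately yields $\dim_{\F_q}\C^{\perp} = mn - k$ by elementary linear algebra, settling the dimension claim. For the MRD property, I first observe that the transpose code $\C^{t} = \{M^{t} : M\in\C\}$ is MRD in $\F_q^{n\times m}$ with the same minimum distance $d$, and that $\mathrm{Tr}(M^{t}N) = \mathrm{Tr}(MN^{t})$ gives $(\C^{t})^{\perp} = (\C^{\perp})^{t}$; hence I may assume $m\le n$. Then equality in \eqref{eq:SingletonBound} gives $k = n(m-d+1)$, so $\dim\C^{\perp} = n(d-1)$. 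Applying \eqref{eq:SingletonBound} to $\C^{\perp}$, its minimum distance $d^{\perp}$ must satisfy $n(d-1) \le n(m - d^{\perp} + 1)$, i.e.\ $d^{\perp} \le m - d + 2$, with equality exactly when $\C^{\perp}$ is MRD. The task therefore reduces to proving the lower bound $d^{\perp} \ge m - d + 2$, i.e.\ that $\C^{\perp}$ contains no nonzero matrix of rank at most $m - d + 1$.

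The plan is to invoke the rank-metric analogue of the MacWilliams identity, also due to Delsarte in \cite{Delsarte} and proved via the association scheme of bilinear forms. Let $A_{i}(\C) = |\{M\in\C : \mathrm{rk}(M) = i\}|$. The identity expresses the rank distribution $(A_{j}(\C^{\perp}))_{j=0}^{m}$ as the image of $(A_{i}(\C))_{i=0}^{m}$ under an explicit linear transform whose entries are $q$-analogues of the Krawtchouk polynomials, built from the Gaussian binomials $\qbin{m}{i}{q}$ and $\qbin{n}{i}{q}$. For an MRD code, the constraints $A_{0}(\C) = 1$, $A_{i}(\C) = 0$ for $1\le i\le d-1$, and $\sum_{i} A_{i}(\C) = q^{n(m-d+1)}$ already determine $(A_{i}(\C))_{i}$ uniquely (the rank distribution of an MRD code is rigid). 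Substituting this explicit distribution into the MacWilliams transform and simplifying, the vanishing of $A_{i}(\C)$ below $d$ together with the reciprocal symmetry of the Gaussian-binomial kernel forces $A_{j}(\C^{\perp}) = 0$ for every $j$ in the range $1\le j\le m - d + 1$, which is precisely the required lower bound on $d^{\perp}$.

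The main obstacle is purely combinatorial: one must verify that the $q$-Krawtchouk kernel applied to the MRD distribution collapses to zero in the claimed range. This is a double sum whose inner sum is tamed by standard $q$-analogue identities (chiefly the $q$-Vandermonde identity and the orthogonality relations satisfied by the Gaussian-binomial Möbius function), after which an outer telescoping yields the vanishing. The calculation is mechanical but error-prone; the genuinely conceptual ingredients are only the non-degeneracy of the trace pairing, the Singleton bound, and Delsarte's rank-metric MacWilliams identity, with the uniqueness of the MRD rank distribution doing the remaining work.
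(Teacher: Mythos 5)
The paper does not actually prove this lemma: it quotes it from Delsarte's Theorem 5.5 (proved there via the association scheme of bilinear forms) and points to \cite{Ravagnani} for an elementary proof. So your attempt can only be judged on its own. The outer skeleton is sound: non-degeneracy of the trace pairing gives $\dim_{\F_q}\C^\perp=mn-k$; the transpose reduction to $m\le n$ is legitimate because the Singleton bound is symmetric in $m,n$ and $\mathrm{Tr}(M^tN)=\mathrm{Tr}(MN^t)$; and the Singleton bound applied to $\C^\perp$ correctly reduces everything to showing $d^\perp\ge m-d+2$.

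The gap is in the central step. You assert that $A_0(\C)=1$, $A_i(\C)=0$ for $1\le i\le d-1$ and $\sum_i A_i(\C)=q^{n(m-d+1)}$ ``already determine $(A_i(\C))_i$ uniquely.'' This is false as a piece of linear algebra: these are $d+1$ linear conditions on the $m+1$ unknowns $A_0,\dots,A_m$, which cannot pin down $A_d,\dots,A_m$ unless $d=m$. The rank distribution of an MRD code (Theorem \ref{weightdistribution} above) is indeed rigid, but its standard derivation --- including Delsarte's original one --- \emph{uses} the duality statement you are trying to prove (one feeds $A_j(\C^\perp)=0$ for $1\le j\le m-d+1$ into the MacWilliams identities and solves the resulting triangular system). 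So your plan is circular: you need the weight distribution to run the MacWilliams transform, and you need the dual's minimum distance to get the weight distribution. The remaining ``mechanical but error-prone'' $q$-Krawtchouk computation you defer is therefore not the only missing piece. A non-circular elementary route (essentially Ravagnani's) avoids weight distributions entirely: for a subspace $U\le\F_q^m$ of dimension $u$ let $V_U=\{M:\mathrm{colsp}(M)\subseteq U\}$, so that $\dim V_U=nu$ and $V_U^\perp=V_{U^\perp}$; the identity $\dim(\C\cap V_U)=k+nu-mn+\dim(\C^\perp\cap V_{U^\perp})$ with $u=d-1$ and $\C\cap V_U=\{0\}$ (any nonzero element would have rank $\le d-1$) forces $\C^\perp\cap V_{W}=\{0\}$ for every $(m-d+1)$-dimensional $W$, hence $\C^\perp$ has no nonzero element of rank $\le m-d+1$, which is exactly $d^\perp\ge m-d+2$.
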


An elementary proof of the result above can be found in \cite{Ravagnani}.

\smallskip

Because of the classifications of the isometries of $\F_q^{m \times n}$ with $m,n \geq 2$ (see e.g. \cite[Theorem 3.4]{Wan}), if $m\neq n$ two RM-codes $\C$ and $\C'$ are \emph{equivalent} if and only if there exist $X \in \mathrm{GL}(m,q)$, $Y \in \mathrm{GL}(n,q)$, $Z \in \F_q^{m\times n}$ and a field automorphism $\sigma$ of $\F_q$ such that
\[\C'=\{XC^\sigma Y + Z \colon C \in \C\}.\]
If $m=n$ we have two possible definitions:
\begin{enumerate}
  \item $\C$ and $\C'$ are \emph{equivalent} if there exist invertible matrices $X,Y \in \F_q^{n\times n}$, $Z \in \F_q^{n \times n}$ and a field automorphism $\sigma$ of $\F_q$ such that
      \[ \C'=\{XC^\sigma Y + Z \colon C \in \C\}. \]
  \item $\C$ and $\C'$ are \emph{strongly equivalent} if there exist invertible matrices $A,B \in \F_q^{n\times n}$, $Z \in \F_q^{n \times n}$ and a field automorphism $\sigma$ of $\F_q$ such that
      \[\C'=\{XC^\sigma Y + Z \colon C \in \C\} \,\, \text{or} \,\, \C'=\{X(C^t)^\sigma Y + Z \colon C \in \C\}. \]
\end{enumerate}
Note that, if $\C$ is an RM-code then the set of all RM-codes strongly equivalent to $\C$ is the union of the set of all RM-codes equivalent to $\C$ and the set of all RM-codes equivalent to $\C^\top$.
When $\C$ and $\C'$ are $\F_q$-linear, we may always assume that $Z=0$. Indeed, for $C=0$ we get $Z \in \C'$ and hence
\[\C'-Z=\{ C'-Z \colon C'\in \C' \}=\C'.\]
For further details on the equivalence of RD-codes see also \cite{Berger,Morrison}.

\medskip

The weight of a codeword $C\in\C$ is the rank of the matrix
corresponding to $C$. The spectrum of weights of an MRD-code is
``complete'' in the following sense.
Denote by $A_i(\C)$ the number of codewords of  weight $i$ of an RM-code $\C$,
then the following holds.

\begin{lemma}\cite[Lemma 2.1]{LTZ2}\label{weight}
  Let $\C$ be an MRD-code in $\F_q^{m\times n}$ with minimum distance $d$ and
  suppose $m \leq n$. Assume that the null matrix $O$ is in $\cC$.
  Then, for any $0 \leq l \leq m-d$, we have $A_{d+l}(\C)>0$, i.e. there
  exists at least
  one matrix $C \in \C$ such that $\mathrm{rk} (C) = d + l$.
\end{lemma}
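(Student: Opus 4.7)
The plan is to compute the rank distribution of $\cC$ in closed form and read off positivity. For any MRD-code in $\F_q^{m\times n}$ containing $O$, the numbers $A_r(\cC)$ depend only on the parameters $(n,m,q;d)$ -- a classical consequence of Delsarte's MacWilliams-type identities for the bilinear-forms association scheme -- so it suffices to verify the claim for any one MRD-code, e.g.\ a linear Gabidulin code. For such a linear $\cC$, Lemma~\ref{dualMRD} gives that the Delsarte dual $\cC^{\perp}$ is itself MRD with minimum distance $m-d+2$ and therefore has a large number of vanishing weights $A_r(\cC^{\perp})$, which by MacWilliams inversion forces the rank enumerator of $\cC$.

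The resulting explicit expression, obtained e.g.\ by $q$-Möbius inversion on the lattice of $\F_q$-subspaces of $\F_q^n$ applied to the count $|\{C\in\cC\colon \mathrm{rowsp}(C)\subseteq V\}|$ (which for a linear MRD-code depends only on $\dim V$ via the shortening property), is
\[
A_{d+l}(\cC)=\binom{m}{d+l}_q\sum_{i=0}^{l}(-1)^{i}\binom{d+l}{i}_q q^{\binom{i}{2}}\bigl(q^{n(l-i+1)}-1\bigr),\qquad 0\le l\le m-d.
\]
For $l=0$ the sum equals $q^{n}-1>0$. For $l\ge 1$ the ratio of consecutive absolute values in the inner sum is bounded above by $q^{d+l-n-1}\le q^{-1}$ since $n\ge m\ge d+l$, so the magnitudes strictly decrease and the alternating-series estimate gives
\[
A_{d+l}(\cC)\ge \binom{m}{d+l}_q\bigl[(q^{n(l+1)}-1)-\binom{d+l}{1}_q(q^{nl}-1)\bigr]>0.
\]

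The main obstacle is the derivation of the closed-form expression: it packages either Delsarte's MacWilliams inversion in the bilinear-forms scheme or, more elementarily, a careful $q$-binomial inclusion-exclusion on the MRD shortening counts. Once this formula is in place the positivity check is a short geometric-decay estimate on an alternating sum.
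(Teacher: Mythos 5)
Your route is the standard one and, as far as this paper is concerned, the only one on offer: the paper does not prove Lemma~\ref{weight} at all (it is quoted from \cite[Lemma 2.1]{LTZ2}), and the explicit rank distribution you derive is precisely the content of Theorem~\ref{weightdistribution}, stated immediately afterwards and likewise left unproved. Indeed, after the reindexing $i=l-t$ your closed form coincides term by term with the displayed formula in Theorem~\ref{weightdistribution}, so your argument amounts to: the lemma follows from Theorem~\ref{weightdistribution} together with a positivity check of the alternating sum. That is a correct and legitimate derivation.

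Two points deserve tightening. First, the lemma assumes only that $O\in\C$, not that $\C$ is $\F_q$-linear, so the step ``it suffices to verify the claim for a Gabidulin code'' rests entirely on the assertion that the rank distribution of an arbitrary MRD-code containing $O$ is determined by its parameters. That assertion is true (it is Delsarte's Theorem 5.6, proved via the inner distribution in the bilinear-forms association scheme), but it is the genuinely hard part of the argument; your subsequent Möbius-inversion/shortening derivation uses linearity (row spaces, the shortening property, Lemma~\ref{dualMRD}) and therefore only re-proves the formula in the linear case. As written, the nonlinear case of the lemma is carried entirely by the one sentence invoking the MacWilliams identities, which should be made explicit rather than treated as a reduction device. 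Second, the decay estimate is slightly off as stated: writing $T_i={d+l \brack i}_q q^{\binom{i}{2}}(q^{n(l-i+1)}-1)$, one gets
\[
\frac{T_{i+1}}{T_i}<\frac{q^{\,d+l-i-n}}{q-1}\le\frac{1}{q-1}\le 1,
\]
using $\frac{q^{a}-1}{q^{a+n}-1}<q^{-n}$ and $q^{i+1}-1\ge q^i(q-1)$; your bound $q^{d+l-n-1}\le q^{-1}$ does not quite survive the $-1$'s when $q=2$ without the extra $(q-1)^{-1}$ factor. With the corrected bound the $|T_i|$ are strictly decreasing, and grouping the alternating sum in consecutive pairs gives $A_{d+l}(\C)>0$ directly, so the conclusion stands.
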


\smallskip

Delsarte in \cite{Delsarte} (and later Gabidulin in \cite{Gabidulin}) precisely determine the weight distribution of an MRD-code.

\begin{theorem}\label{weightdistribution}
Let $\cC$ be an MRD-code in $\F_q^{m\times n}$ with minimum distance $d$ and
suppose $m \leq n$. Then
\[ A_{d+l}(\C)={m \brack d+l}_q \sum_{t=0}^l (-1)^{t-l}{l+d \brack l-t}_q q^{\binom{l-t}{2}}(q^{n(t+1)}-1), \]
for $l \in \{0,1,\ldots,m-d\}$.
In particular, the number of codewords with minimum weight $d$ is
\begin{equation}\label{numberofmin}
A_d(\C)={m \brack d}_q(q^n-1)= \frac{(q^n-1)(q^m-1)(q^{m-1}-1)\cdots(q^{m-d+1}-1)}{(q^d-1)(q^{d-1}-1)\cdots(q-1)}.
\end{equation}
\end{theorem}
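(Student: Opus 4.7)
The plan is to derive the full weight distribution from a single counting identity and then invert it via a $q$-binomial Möbius transform. The key preparatory lemma is: for every $\F_q$-subspace $W\subseteq \F_q^m$ of dimension $s$ with $d\leq s\leq m$, the number of codewords $C\in\C$ whose column span lies in $W$ equals $q^{n(s-d+1)}$. For the upper bound, fix an identification $W\cong \F_q^s$; then $\{C\in\C:\mathrm{image}(C)\subseteq W\}$ is an $\F_q$-linear rank-metric code in $\F_q^{s\times n}$ of minimum distance at least $d$, and since $s\leq m\leq n$, the Singleton-like bound \eqref{eq:SingletonBound} forces at most $q^{n(s-d+1)}$ codewords. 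For the lower bound, the quotient $\F_q^m\to\F_q^m/W$ induces an $\F_q$-linear map $\pi\colon\F_q^{m\times n}\to\F_q^{(m-s)\times n}$ whose kernel meets $\C$ in exactly this set; the trivial bound $|\pi(\C)|\leq q^{n(m-s)}$ combined with $|\C|=q^{n(m-d+1)}$ yields $|\ker(\pi|_\C)|\geq q^{n(s-d+1)}$.

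Double-counting the pairs $(C,W)$ with $C\in\C$, $\dim_{\F_q}W=s$, and $\mathrm{image}(C)\subseteq W$ then gives, using that a fixed $i$-dimensional subspace is contained in $\qbin{m-i}{s-i}{q}$ subspaces of dimension $s$,
\begin{equation*}
\sum_{i=0}^{m} A_i(\C)\,\qbin{m-i}{s-i}{q} \;=\; \qbin{m}{s}{q}\,q^{n(s-d+1)},\qquad d\leq s\leq m.
\end{equation*}
Isolating $A_0=1$ and using $A_i=0$ for $1\leq i\leq d-1$, the substitution $s=d+l$ turns this into the triangular system
\begin{equation*}
\sum_{j=0}^{l} A_{d+j}(\C)\,\qbin{m-d-j}{l-j}{q} \;=\; \qbin{m}{d+l}{q}\bigl(q^{n(l+1)}-1\bigr),\qquad 0\leq l\leq m-d.
\end{equation*}

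To solve for the $A_{d+l}(\C)$, I apply $q$-binomial inversion. The identity $\qbin{m-d-j}{l-j}{q}\qbin{m-d}{j}{q}=\qbin{m-d}{l}{q}\qbin{l}{j}{q}$ allows the renormalization $\tilde a_j:=A_{d+j}(\C)/\qbin{m-d}{j}{q}$, reducing the system to the classical form $\tilde g(l)=\sum_{j=0}^l\qbin{l}{j}{q}\tilde a_j$; the standard inverse $\tilde a_l=\sum_{j=0}^l(-1)^{l-j}q^{\binom{l-j}{2}}\qbin{l}{j}{q}\tilde g(j)$, followed by un-renormalizing and collapsing the companion identity $\qbin{m-d-j}{l-j}{q}\qbin{m}{d+j}{q}=\qbin{m}{d+l}{q}\qbin{d+l}{l-j}{q}$, yields exactly the announced closed form for $A_{d+l}(\C)$. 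Specializing to $l=0$ leaves only the term $A_d(\C)=\qbin{m}{d}{q}(q^n-1)$, which on expansion of the $q$-binomial becomes the second expression in \eqref{numberofmin}. The main delicacy is the bookkeeping around the two $q$-binomial product identities displayed above; once these are recorded, the inversion is mechanical, and no appeal to MacWilliams-type dualities or to Lemma~\ref{dualMRD} is needed.
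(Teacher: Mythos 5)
Your proof is correct, but note that the paper itself gives no proof of this theorem: it is stated as a known result of Delsarte (and Gabidulin), whose original derivation goes through the machinery of association schemes and a MacWilliams-type duality. Your argument is the elementary modern route (essentially the one in Ravagnani's paper cited as \cite{Ravagnani}): the key lemma that for every $s$-dimensional $W\subseteq\F_q^m$ with $d\le s\le m$ one has $|\{C\in\C:\mathrm{im}(C)\subseteq W\}|=q^{n(s-d+1)}$ is proved correctly (Singleton applied to the restricted code for the upper bound, the kernel--image count for the projection for the lower bound), the double count $\sum_i A_i(\C)\qbin{m-i}{s-i}{q}=\qbin{m}{s}{q}q^{n(s-d+1)}$ is right, and both $q$-binomial product identities you invoke, as well as the Gauss inversion formula, check out; specializing to $l=0$ indeed gives \eqref{numberofmin}. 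What your approach buys is a self-contained proof using only the Singleton bound \eqref{eq:SingletonBound} and linear algebra, with no duality theory. The one caveat worth recording is scope: your argument uses $\F_q$-linearity of $\C$ in three places (the subcode $\{C:\mathrm{im}(C)\subseteq W\}$ being a linear code of minimum distance $\ge d$, the factorization $|\C|=|\ker(\pi|_\C)|\cdot|\pi(\C)|$, and $A_0=1$ with $A_i=0$ for $0<i<d$), whereas Delsarte's theorem determines the inner (distance) distribution of an arbitrary MRD code. Since the theorem as stated in the paper speaks of weight distributions and is only applied to $\F_q$-linear codes (namely to $\C^\top$ in the proof of Theorem \ref{converse}), this restriction is harmless here, but you should state the linearity hypothesis explicitly.
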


\medskip

In general, it is difficult to determine whether two RM-codes are equivalent or not.
\emph{Idealiser} are useful tools criterion to handle the equivalence issue.

\smallskip

Let $\C\subset \F_q^{m\times n}$ be an RM-code; its \emph{left} and \emph{right idealisers}
$L(\C)$ and $R(\C)$ are defined as
\[ L(\C)=\{ Y \in \F_q^{m \times m} \colon YC\in \C\hspace{0.1cm} \text{for all}\hspace{0.1cm} C \in \C\},\]
\[ R(\C)=\{ Z \in \F_q^{n \times n} \colon CZ\in \C\hspace{0.1cm} \text{for all}\hspace{0.1cm} C \in \C\}.\]
These notions have been introduced by  Liebhold and Nebe in \cite[Definition 3.1]{LN2016}.
Such sets appear also in the paper of Lunardon, Trombetti and Zhou \cite{LTZ2}, where they are
respectively called \emph{middle nucleus} and \emph{right nucleus}; therein the authors investigate these sets proving the following results.

\begin{result}\cite[Propositions 4.1 and 4.2, Theorem 5.4 \& Corollary 5.6]{LTZ2}\label{idealis}
If $\cC_1$ and $\cC_2$ are equivalent $\F_q$-linear RM-codes of $\F_q^{m\times n}$, then their left (resp. right) idealisers are also equivalent.
Let $\C$ be an $\F_q$-linear RM-code of $\F_q^{m \times n}$.
The following statements hold:
\begin{itemize}
  \item [(a)] $L(\C^\top)=R(\C)^\top$ and $R(\C^\top)=L(\C)^\top$;
  \item [(b)] $L(\C^\perp)=L(\C)^\top$ and $R(\C^\perp)=R(\C)^\top$.
\end{itemize}
Suppose that $\C$ is an $\F_q$-linear MRD-code of $\F_q^{m\times n}$ with minimum distance $d>1$.
If $m \leq n$, then $L(\C)$ is a finite field with $|L(\C)|\leq q^m$.
If $m \geq n$, then $R(\C)$ is a finite field with $|R(\C)|\leq q^n$.
In particular, when $m=n$ then $L(\C)$ and $R(\C)$ are both finite fields.
\end{result}

\medskip

In \cite{delaCruz} de la Cruz, Kiermaier, Wassermann and Willems point out the one-to-one correspondence between quasifields and MRD-codes of $\F_q^{n\times n}$ with minimum distance $n$.
For definitions and properties of quasifields and semifields we refer to \cite{Dembowski,LavrauwPolverino}.

\begin{theorem}\cite[Theorems 2, 3 \& 4]{delaCruz}\label{MRD-semifields}
If $\mathbb{K}$ is a finite field then
\begin{itemize}
  \item MRD-codes in $\mathbb{K}^{n\times n}$ (containing the zero and identity matrix) with minimum distance $n$ correspond to finite quasifields $\mathcal{Q}$ with $\mathbb{K} \leq \ker \mathcal{Q}$ and $\dim_{\mathbb{K}}\mathcal{Q}=n$.
  \item Additively closed MRD-codes (containing the identity matrix) in $\mathbb{K}^{n\times n}$ with minimum distance $n$ correspond to finite semifields $\mathcal{S}$ with $\mathbb{K} \leq \ker \mathcal{S}$ and $\dim_{\mathbb{K}}\mathcal{S}=n$.
  \item $\mathbb{K}$-linear MRD-codes (containing the identity matrix) in $\mathbb{K}^{n\times n}$ with minimum distance $n$ correspond to finite division algebras $\mathcal{A}$ over $\mathbb{K}$ where $\mathbb{K}\leq Z(\mathcal{A})$ and $\dim_{\mathbb{K}} \mathcal{A}=n$.
\end{itemize}
\end{theorem}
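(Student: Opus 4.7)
The strategy is to recognise that an MRD-code $\mathcal{C} \subseteq \mathbb{K}^{n\times n}$ with minimum distance $n$ (containing $0$ and $I$) is exactly a \emph{spread set} of $\mathbb{K}$-linear endomorphisms of $V := \mathbb{K}^n$, and then to invoke the classical dictionary between spread sets and quasifields. Setting $q = |\mathbb{K}|$, the Singleton-like bound \eqref{eq:SingletonBound} with $m = n = d$ forces $|\mathcal{C}| = q^n$, while the minimum-distance hypothesis says that the difference of any two distinct elements of $\mathcal{C}$ is invertible; in particular every nonzero element of $\mathcal{C}$ has rank $n$.

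For part (1), fix any $v \in V \setminus \{0\}$ and consider the evaluation map $\mathcal{C} \to V$, $M \mapsto Mv$. It is injective by invertibility of differences and hence bijective by $|\mathcal{C}| = |V| = q^n$. Writing $f_y \in \mathcal{C}$ for the unique preimage of $y$, define
\[
x \circ y := f_y(x), \qquad x,y \in V.
\]
Since $I \in \mathcal{C}$ we have $f_v = I$, so $v$ is a two-sided identity; right distributivity follows from the $\mathbb{K}$-linearity of each $f_z$; unique solvability of $x \circ a = b$ for $a \neq 0$ holds because $f_a$ is invertible; and unique solvability of $a \circ x = b$ in $x$ holds because evaluation at $a \neq 0$ is again a bijection $\mathcal{C} \to V$ by the spread property. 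This equips $V$ with the structure of a quasifield $\mathcal{Q}$. The $\mathbb{K}$-linearity of each $f_z$ further shows that left multiplication by $kv$ coincides with scalar multiplication by $k$, forcing $\mathbb{K}\cdot 1_{\mathcal{Q}} \subseteq \ker \mathcal{Q}$ and $\dim_{\mathbb{K}} \mathcal{Q} = n$. Conversely, given such a quasifield, the right multiplications $R_q : x \mapsto x \circ q$ are $\mathbb{K}$-linear endomorphisms of $\mathcal{Q}$, and $\{R_q : q \in \mathcal{Q}\}$ is a spread set containing $R_0 = 0$ and $R_1 = I$, hence an MRD-code of the prescribed shape.

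For parts (2) and (3), the additional algebraic structure on $\mathcal{C}$ is transported through the bijection $q \mapsto R_q$ (whose inverse is evaluation at $v = 1_{\mathcal{Q}}$): additive closure of $\mathcal{C}$ gives $R_{a+b} = R_a + R_b$, i.e.\ left distributivity, promoting the quasifield to a semifield $\mathcal{S}$; and $\mathbb{K}$-linearity of $\mathcal{C}$ further gives $R_{kq} = k R_q$ for $k \in \mathbb{K}$, which via the identity $f_{kv} = kI$ forces the embedded copy of $\mathbb{K}$ to commute with every element and hence to lie in $Z(\mathcal{A})$. The converse directions are immediate from the definitions: the set of right multiplications of a semifield is additively closed in $\mathrm{End}_{\mathbb{K}}(\mathcal{S})$, and the set of right multiplications of an algebra $\mathcal{A}$ with $\mathbb{K}\subseteq Z(\mathcal{A})$ is a $\mathbb{K}$-subspace of $\mathrm{End}_{\mathbb{K}}(\mathcal{A})$, with the required rank-$n$ property guaranteed by the absence of zero divisors.

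The main obstacle is largely bookkeeping: one must consistently fix a left-versus-right action convention, verify that each quasifield/semifield/division-algebra axiom matches precisely with a code-theoretic property (spread set, additive closure, $\mathbb{K}$-linearity), and carefully identify why the embedded copy $\mathbb{K}\cdot 1$ lands in $\ker \mathcal{Q}$, $\ker \mathcal{S}$ or $Z(\mathcal{A})$ as appropriate. The overall picture is a layered refinement: each extra closure property on $\mathcal{C}$ corresponds to a particular distributive or centrality law on the resulting multiplication.
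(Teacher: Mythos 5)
The paper does not prove this statement: it is quoted verbatim from de la Cruz--Kiermaier--Wassermann--Willems as a known result, so there is no internal proof to compare against. Your argument is correct and is exactly the classical spread-set dictionary used in that reference: the Singleton bound forces $|\mathcal{C}|=|\mathbb{K}|^n$, minimum distance $n$ makes differences invertible (which, note, is also what yields the planarity condition $x\circ a=x\circ b+c$ via invertibility of $f_a-f_b$), and the evaluation bijection $M\mapsto Mv$ transports the successive closure properties of $\mathcal{C}$ into right distributivity, left distributivity, and centrality of $\mathbb{K}\cdot 1$ respectively.
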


\subsection{Representation as linearized polynomials and known examples of MRD-codes}\label{sec:exMRD}

Any $\F_q$-linear rank metric code over $\F_q$ can
be equivalently defined  either as a subspace of matrices in $\F_{q}^{m\times n}$
or as a subspace of $\mathrm{Hom}(V_n,V_m)$, where $V_i$ is an $i$-dimensional $\F_q$-vector space. In the present section we
shall recall a special representation in terms of
linearized polynomials.

Consider two vector spaces $V_n$ and $V_m$ over $\F_q$ with dimension respectively $n$ and $m$, respectively. If $n\geq m$
we can always regard $V_m$ as a subspace of $V_n$ and identify
$\mathrm{Hom}(V_n,V_m)$ with the subspace of those $\varphi\in\mathrm{Hom}(V_n,V_n)$ with
$\mathrm{Im}(\varphi)\subseteq V_m$. Also, $V_n\cong\F_{q^n}$,
when $\F_{q^n}$ is considered as a $\F_q$-vector space of dimension $n$.
Let now  $\mathrm{Hom}_q(\F_{q^n}):=\mathrm{Hom}_q(\F_{q^n},\F_{q^n})$ be the set of all
$\F_q$--linear $\F_{q^n}\rightarrow \F_{q^n}$ maps.
It is well known that each element of $\mathrm{Hom}_q(\F_{q^n})$ can be
represented in a unique way as a $q$--polynomial over $\F_{q^n}$ modulo $x^{q^n}-x$;
see \cite{lidl_finite_1997}.
In other words, for any $\varphi\in\mathrm{Hom}_q(\F_{q^n})$ there is
an unique polynomial $f(x)$ of the form
\[ f(x):=\sum_{i=0}^{n-1} a_i x^{q^i}\]
with $a_i \in \F_{q^n}$ such that
\[ \forall x\in\F_{q^n}\colon \varphi(x)=f(x)=a_0x+a_1x^q+\cdots+a_{n-1}x^{q^{n-1}}. \]
The set $\tilde{\cL}_{n,q}$ of the $q$-polynomials over $\F_{q^n}$ with degree less than or equal to $q^{n-1}$ with the
usual sum and scalar multiplication  is
a vector space over $\F_{q^n}$.
When it is regarded as a
vector space over $\F_q$, its dimension is $n^2$
and it is isomorphic to $\F_q^{n\times n}$.
Actually, $\tilde{\cL}_{n,q}$ endowed with the product $\circ$ induced by the functional
composition in $\mathrm{Hom}_q(\F_{q^n})$ modulo $x^{q^n}-x$ is an algebra over $\F_{q}$.

Hence, it is possible to see that any $\F_q$-linear rank metric code might be regarded as a suitable $\F_q$-subspace of $\tilde{\cL}_{n,q}$.
So, the definitions given for rank metric codes in $\F_q^{m\times n}$ may be reformulated in the linearized polynomials framework.

Two $\F_q$-linear rank metric codes $\cC$ and $\cC'$ are equivalent if and only if
there exist two invertible $q$-polynomials  $h$ and $g$ and a field automorphism $\sigma$ such that
$\cC'=\{h \circ f^\sigma \circ g \colon f\in \cC\}$, where if $\displaystyle f(x):=\sum_{i=0}^{n-1}a_ix^{q^i}$ then $\displaystyle f^\sigma(x)=\sum_{i=0}^{n-1}a_i^\sigma x^{q^i}$.

The notion of Delsarte dual code can be written in terms of
$q$-polynomials as follows,
see for example \cite[Section 2]{LTZ}.
Let $b:\tilde{\cL}_{n,q}\times\tilde{\cL}_{n,q}\to\F_q$ be the bilinear form
given by
\[ b(f,g)=\mathrm{Tr}_{q^n/q}\left( \sum_{i=0}^{n-1} f_ig_i \right) \]
where $\displaystyle f(x)=\sum_{i=0}^{n-1} f_i x^{q^i}$ and $\displaystyle g(x)=\sum_{i=0}^{n-1} g_i x^{q^i} \in \F_{q^n}[x]$ and
we denote by $\mathrm{Tr}_{q^n/q}$  the trace function $\F_{q^n}\to\F_q$ trace function, that is, $\mathrm{Tr}_{q^n/q}(x)=\sum_{i=0}^{n-1}x^{q^i}$.
The Delsarte dual code $\cC^\perp$ of an $\F_q$-subspace $\cC$ of $\tilde{\cL}_{n,q}$ is
\[\cC^\perp = \{f \in \tilde{\cL}_{n,q} \colon b(f,g)=0, \hspace{0.1cm}\forall g \in \cC\}. \]
Recall that the \emph{adjoint} $\hat{f}$ of the linearized polynomial $\displaystyle f(x)=\sum_{i=0}^{n-1} a_ix^{q^i} \in \tilde{\mathcal{L}}_{n,q}$ with respect to the bilinear form $b$ is
\[ \hat{f}(x)=\sum_{i=0}^{n-1} a_i^{q^{n-i}}x^{q^{n-i}}. \]
So, the adjoint code $\cC^\top$ of a set of $q$-polynomials $\cC$ is
\[ \cC^\top= \{\hat{f} \colon f \in \cC\}\subseteq \tilde{\cL}_{n,q}. \]

Furthermore, the left and right idealisers of an $\F_q$-linear code
$\cC\subseteq\tilde{\cL}_{n,q}$ can be written as
\[L(\cC)=\{\varphi(x) \in \tilde{\cL}_{n,q} \colon \varphi \circ f \in \cC\, \text{for all} \, f \in \cC\};\]
\[R(\cC)=\{\varphi(x) \in \tilde{\cL}_{n,q} \colon f \circ \varphi \in \cC\, \text{for all} \, f \in \cC\}.\]

When $L(\C)$ (resp. $R(\C)$) is equal to $\mathcal{F}_n=\{\alpha x \colon \alpha \in \F_{q^n}\}$ we say that $\C$ is $\F_{q^n}$-\emph{linear on the left} (resp. \emph{right}) (or simply $\F_{q^n}$-\emph{linear} if it is clear from the context).
In the literature it is quite common to find the term $\F_{q^n}$-linear instead of $\F_{q^n}$-linear on the left.
Of course, recalling that $\widehat{f\circ g}=\hat{g}\circ \hat{f}$, if $\cC$ is $\F_{q^n}$-linear on the left, then $\cC^\top$ is $\F_{q^n}$-linear on the right.
The following result holds.

\begin{result}\cite[Theorem 6.1]{CMPZ}\cite[Theorem 2.2]{CsMPZh}\label{rightvectorspace}
Let $\C$ be an $\F_q$-linear MRD-code of dimension $nk$ with parameters $(n,n,q;n-k+1)$.
Then $L(\C)$ (resp. $R(\C)$) has maximum order $q^n$ if and only if there exists an MRD-code $\C'$ equivalent to $\C$ which is $\F_{q^n}$-linear on the left (resp. on the right).
\end{result}

\smallskip

Now, we are going to present the known maximum rank distance codes by using their representation as sets of linearized polynomials of $\tilde{\mathcal{L}}_{n,q}$.
In \cite{Delsarte}, Delsarte gives the first construction for linear MRD-codes (he calls such sets \emph{Singleton systems}), though from the perspective of bilinear forms. Few years later, Gabidulin in \cite[Section 4]{Gabidulin} presents the same class of MRD-codes by using linearized polynomials.
Although these codes have been originally discovered by Delsarte, they are called \emph{Gabidulin codes} and they can be written as follows
\[ \mathcal{G}_{k}=\{a_0x+a_1x^q+\ldots+a_{k-1}x^{q^{k-1}} \colon a_0,\ldots,a_{k-1}\in \F_{q^n}\}=\langle x,x^q,\ldots,x^{q^{k-1}} \rangle_{\F_{q^n}}, \]
with $k\leq n-1$ and it results to be $\F_{q^n}$-linear on the left and on the right.
Kshevetskiy and Gabidulin in \cite{kshevetskiy_new_2005} generalize the previous construction obtaining the so-called \emph{generalized Gabidulin codes}
\[\mathcal{G}_{k,s}=\langle x,x^{q^s},\ldots,x^{q^{s(k-1)}} \rangle_{\F_{q^n}},\]
with $\gcd(s,n)=1$ and $k\leq n-1$.
$\mathcal{G}_{k,s}$ is an $\F_{q}$-linear MRD-code with parameters $(n,n,q;n-k+1)$ and $L(\mathcal{G}_{k,s})=R(\mathcal{G}_{k,s})\simeq \F_{q^n}$, see \cite[Lemma 4.1 \& Theorem 4.5]{LN2016} and \cite[Theorem IV.4]{Morrison}.
Note that, as proved in \cite{Gabidulin,kshevetskiy_new_2005}, this family is closed by the Delsarte duality and by the adjoint operation, more precisely $\mathcal{G}_{k,s}^\perp$ is equivalent to $\mathcal{G}_{n-k,s}$ and $\mathcal{G}_{k,s}^\top$ is equivalent to itself.
Note that the corresponding quasifield of a generalized Gabidulin code with minimum distance $n$ and $k=1$ is a field because of their idealisers.
Furthermore, generalized Gabidulin codes have been characterized in \cite{H-TM}.

\smallskip

More recently, Sheekey in \cite{Sheekey} proves that the set
\[ \mathcal{H}_k(\eta,h)=\{a_0x+a_1x^q+\ldots+a_{k-1}x^{q^{k-1}}+a_0^{q^{h}}\eta x^{q^k} \colon a_i \in \F_{q^n}\}, \]
with $k\leq n-1$ and $\eta \in \F_{q^n}$ such that $\N_{q^n/q}(\eta)\neq (-1)^{nk}$, is an $\F_q$-linear MRD-code of dimension $nk$ with parameters $(n,n,q;n-k+1)$.
This code is known as \emph{twisted Gabidulin code}.
It is possible to replace $q$ by $q^s$ (cf. \cite[Remark 9]{Sheekey}), with $\gcd(s,n)=1$, obtaining that the set
\[ \mathcal{H}_{k,s}(\eta,h)=\{a_0x+a_1x^{q^s}+\ldots+a_{k-1}x^{q^{s(k-1)}}+a_0^{q^{sh}}\eta x^{q^{sk}} \colon a_i \in \F_{q^n}\}, \]
with $k\leq n-1$ and $\eta \in \F_{q^n}$ such that $\N_{q^n/q}(\eta)\neq (-1)^{nk}$, is an $\F_q$-linear MRD-code of dimension $nk$ with parameters $(n,n,q;n-k+1)$.
This code is called \emph{generalized twisted Gabidulin code}.
Lunardon, Trombetti and Zhou in \cite{LTZ} determined the automorphism group of the generalized twisted Gabidulin codes and, up to equivalence, they proved that the generalized Gabidulin codes and twisted Gabidulin codes are both proper subsets of this class.

Clearly, with $s=1$, $\mathcal{H}_{k,s}(\eta,h)$ is the twisted Gabidulin code $\mathcal{H}_{k}(\eta,h)$ and for $\eta=0$ it is exactly the generalized Gabidulin code $\mathcal{G}_{k,s}$.
With $k=1$ the quasifield associated with $\mathcal{H}_{1,s}(\eta,h)$ is the generalized twisted field, which is a presemifield found by Albert \cite{Albert}.
Also, the authors in \cite[Corollary 5.2]{LTZ} determine its left and right idealisers: if $\eta \neq 0$, then
\begin{equation}\label{leftrightidealH}
L(\mathcal{H}_{k,s}(\eta,h))\simeq\F_{q^{\gcd(n,h)}} \,\, \text{and} \,\, R(\mathcal{H}_{k,s}(\eta,h))\simeq\F_{q^{\gcd(n,sk-h)}}.
\end{equation}
As for the above family, the class of generalized twisted Gabidulin codes is closed by the Delsarte duality and by the adjoint operation, more precisely $\mathcal{H}_{k,s}(\eta,h)^\perp$ is equivalent to $\mathcal{H}_{n-k,s}(-\eta,n-h)$ and $\mathcal{H}_{k,s}(\eta,h)^\top$ is equivalent to $\mathcal{H}_{k,s}(1/\eta,sk-h)$,
\cite[Theorem 6]{Sheekey} and \cite[Propositions 4.2 \& 4.3]{LTZ}.
Moreover, in \cite{GiuZ} the MRD-code $\mathcal{H}_{k,s}(\eta,0)$ has been characterized in terms of intersections with some of its conjugates.

In \cite[Theorem 7]{Sheekey}, the author proves that $\mathcal{G}_{k,s}$ is equivalent to $\mathcal{H}_{k,1}(\eta,h)$ if and only if $k \in \{1,n-1\}$ and $h \in \{0,1\}$, while the equivalence between $\mathcal{H}_{k,s}(\eta,h)$ and $\mathcal{H}_{k,t}(\theta,g)$ has been completely answered in \cite[Thereom 4.4]{LTZ}.

\smallskip

A further generalization of twisted Gabidulin codes is due to Otal and \"Ozbudak in \cite{OzbudakOtal}, they prove that the set
\[ \mathcal{A}_{k,s,q_0}(\eta,h)=\{a_0x+a_1x^{q^s}+\ldots+a_{k-1}x^{q^{s(k-1)}}+\eta a_0^{q_0^h}x^{q^{sk}} \colon a_i \in \F_{q^n}\}, \]
with $\gcd(n,s)=1$, $q=q_0^u$, $k<n$ and $\eta \in \F_{q^n}$ such that $\N_{q^n/q_0}(\eta)\neq (-1)^{nku}$, is an $\F_{q_0}$-linear MRD-code of size $q^{nk}$ with parameters $(n,n,q;n-k+1)$.
They call this family \emph{additive generalized twisted Gabidulin codes} and very recently Sheekey in \cite{Sheekey2018} generalizes this further by looking at skew polynomial rings.

\smallskip

Trombetti and Zhou in \cite{TZ} find a new family of MRD-codes of $\tilde{\mathcal{L}}_{n,q}$, with $n$ even. More precisely, the set
\[ \mathcal{D}_{k,s}(\gamma)=\left\{ ax+c_1x^{q^s}+\ldots+c_{k-1}x^{q^{s(k-1)}}+\gamma b x^{q^{sk}} \colon c_i \in \F_{q^{n}}, a,b \in \F_{q^{\frac{n}2}}  \right\}, \]
with $\gcd(s,n)=1$ and $\gamma \in \F_{q^{n}}$ such that $\N_{q^{n}/q}(\gamma)$ is a non-square in $\F_q$, is an MRD-code of size $q^{nk}$ and parameters $(n,n,q;n-k+1)$.
Both its idealisers are isomorphic to $\F_{q^{\frac{n}2}}$.

\smallskip

Apart from the two infinite families of $\F_{q}$-linear MRD-codes $\F_{q^n}$-linear on the left, $\cG_{k,s}$ and $\cH_{k,s}(\eta,0)$, there are a few other examples, known for $n \in \{6,7,8\}$.

\begin{itemize}
  \item In \cite{CMPZ}, Csajb\'ok, Marino, and Zanella, jointly with the first author, prove the following results
        \begin{itemize}
             \item for $q>4$ it is always possible to find $\delta \in
                   \F_{q^2}$ such that the set
                   \[ \C_1=\langle x,\delta x^{q}+x^{q^4} \rangle_{\F_{q^6}} \]
                   is an MRD-code with parameters $(6,6,q;5)$, \cite[Theorem 7.1]{CMPZ}.
                   In \cite[Theorem 7.3]{PolverinoZullo2019}, explicit conditions on $\delta$, that guarantee that $\mathcal{C}_1$ is an MRD-code, are exhibited.
                   Its Delsarte dual code is equivalent to
                   \[ \cD_1=\langle x^{q}, x^{q^{2}}, x^{q^{4}},x-\delta^{q} x^{q^3} \rangle_{\F_{q^6}}, \]
                    whose parameters are $(6,6,q;3)$;
            \item for $q$ odd and $\delta \in \F_{q^8}$ such that
                  $\delta^2=-1$ the set
                  \[ \C_2=\langle x,\delta x^{q}+x^{q^{5}} \rangle_{\F_{q^8}} \]
                  is an MRD-code with parameters $(8,8,q;7)$, \cite[Theorem 7.2]{CMPZ}. Its Delsarte dual code is equivalent to
                  \[ \cD_2=\langle x^{q},x^{q^2},x^{q^3},x^{q^5},x^{q^6},x-\delta x^{q^4} \rangle_{\F_{q^8}}, \]
                  whose parameters are $(8,8,q;3)$.
        \end{itemize}
  \item In \cite{CsMZ2018} Csajb\'ok, Marino, jointly with the second author, prove that for $q$ odd, $q\equiv 0,\pm 1 \pmod{5}$ and $\delta^2+\delta=1$ the set
         \[ \cC_3=\langle x,x^{q}+x^{q^{3}}+\delta x^{q^5} \rangle_{\F_{q^6}} \]
         is an MRD-code with parameters $(6,6,q;5)$. Later, Marino, Montanucci and the second author in \cite{MMZ} prove that this holds true for each $q$ odd. Its Delsarte dual code is equivalent to
         \[ \cD_3=\langle x^q,x^{q^3},-x+x^{q^2}, \delta x - x^{q^4} \rangle_{\F_{q^6}}, \]
         whose parameters are $(6,6,q;3)$.
  \item In \cite{ZZ}, Zanella and the second author prove that for $q$ odd$, q \equiv 1 \pmod{4}$ and $q\leq 29$ the set
        \[ \cC_4=\langle x,x^q-x^{q^2}+x^{q^4}+x^{q^5} \rangle_{\F_{q^6}} \]
         is an MRD-code with parameters $(6,6,q;5)$. Its Delsarte dual code is equivalent to
         \[ \cD_4=\langle x^{q^3},x^q+x^{q^2}, x^q - x^{q^4}, x^q-x^{q^5} \rangle_{\F_{q^6}}, \]
         whose parameters are $(6,6,q;3)$.
  \item In \cite{BZZ}, Bartoli, Zanella and the second author generalize the aforementioned example, proving that for $h\in\F_{q^6}$, $h^{q^3+1}=-1$ and $q$ odd the set
        \[ \cC_4'=\langle x, h^{q-1}x^q-h^{q^2-1}x^{q^2}+x^{q^4}+x^{q^5}\rangle_{\F_{q^6}}\]
        is an MRD-code with parameters $(6,6,q;5)$.  Its Delsarte dual code is equivalent to
         \[ \cD_4'=\langle x^{q^3},h^{q^2}x^q+h^qx^{q^2}, x^q -h^{q-1} x^{q^4}, x^q-h^{q-1}x^{q^5} \rangle_{\F_{q^6}}, \]
         whose parameters are $(6,6,q;3)$.
  \item In \cite{CsMPZh}, Csajb\'ok, Marino and Zhou, jointly with the first author, prove the following results
        \begin{itemize}
          \item for $q$ odd and $\gcd(s,7)=1$ the set
                \[ \cC_5=\langle x,x^{q^s}, x^{q^{3s}} \rangle_{\F_{q^7}} \]
                is an MRD-code with parameters $(7,7,q;5)$, \cite[Theorem 3.3]{CsMPZh}. Its Delsarte dual code is equivalent to
                \[ \cD_5=\langle x,x^{q^{2s}},x^{q^{3s}},x^{q^{4s}} \rangle_{\F_{q^7}}, \]
                whose parameters are $(7,7,q;4)$;
          \item for $q \equiv 1 \pmod{3}$ and $\gcd(s,8)=1$ the set
                \[ \cC_6=\langle x,x^{q^s}, x^{q^{3s}} \rangle_{\F_{q^8}} \]
                is an MRD-code with parameters $(8,8,q;6)$, \cite[Theorem 3.5]{CsMPZh}. Its Delsarte dual code is equivalent to
                \[\cD_6=\langle x,x^{q^{2s}},x^{q^{3s}},x^{q^{4s}},x^{q^{5s}} \rangle_{\F_{q^8}}\]
                whose parameters are $(8,8,q;4)$.
        \end{itemize}
\end{itemize}

Note that the examples presented in \cite{CsMPZh} have maximal idealisers.

\medskip

Other examples of MRD-codes, but not of square type, and further investigations can be found in \cite{H-TM,Puch,SchmidtZhou}.
In particular, we point out that it is possible to produce MRD-codes in $\F_q^{m\times n}$, with $m\leq n$, from an MRD-code in $\F_q^{n\times n}$ (hence from $\tilde{\mathcal{L}}_{n,q}$) by puncturing; such a technique has been studied in \cite{ByrneRavagnani,CsS,Martinez}.

\smallskip

The first example of non linear MRD-code has been found by Cossidente, Marino and Pavese in \cite{CossMP}, which has been generalized by Durante and Siciliano in \cite{DurSic}. By using a more geometric approach Donati and Durante present in \cite{DonDur} a further generalization. Also, Otal and \"Ozbudak in \cite{OzbudakOtal2018}, in the framework of linearized polynomials, find a further example.

\smallskip

For a recent survey on MRD-codes see \cite{sheekey_newest_preprint,PhDthesis}.

\section{Connections between maximum scattered linear sets and MRD-codes}\label{sec:scatteredMRD}

Now we will discuss connections between linear sets and MRD-codes.
We divide the results into two subsections: first we analyze the known results for the square case and then we characterize non-square MRD-codes with some restrictions on the involved parameters.
Finally, we conclude this section by giving an alternative proof of the well-known Blokhuis-Lavrauw's bound using a completely different approach.

\subsection{Square case}

In \cite[Section 5]{Sheekey} Sheekey shows that maximum scattered $\F_q$-linear sets of $\PG(1,q^n)$ yield $\F_q$-linear
MRD-codes with parameters $(n,n,q;n-1)$.
More precisely, let $U_f=\{(x,f(x)) \colon x \in \F_{q^n}\}$ be an $\F_q$-subspace of $\F_{q^n}\times\F_{q^n}$ for some $q$-polynomial $f(x)$ over $\F_{q^n}$ and consider the following set of linearized polynomials over $\F_{q^n}$
\begin{equation}\label{Cf}
\C_f=\{af(x)+bx \colon a,b \in \F_{q^n}\}=\langle x, f(x) \rangle_{\F_{q^n}}.
\end{equation}
Note that $\cC_f$ is $\F_{q^n}$-linear on the left, i.e. $L(\C_f) \simeq \F_{q^n}$.

Then the following holds.

\begin{theorem}\cite{Sheekey}\label{4.1}
Let $f \in \tilde{\mathcal{L}}_{n,q}$.
Then $\C_f$ is an $\F_q$-linear MRD-code with parameters $(n,n,q;n-1)$ if and only if $U_f$ is a maximum scattered $\F_q$-subspace of $\F_{q^n}\times\F_{q^n}$.
\end{theorem}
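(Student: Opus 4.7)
The plan is to reformulate both sides as statements about kernels of the $\F_q$-linear maps in the pencil $\langle x, f(x)\rangle_{\F_{q^n}}$. Write $g_{a,b}(x) := a f(x) + b x$ for $(a,b)\in\F_{q^n}^2$, so that $\C_f = \{g_{a,b} : a,b\in\F_{q^n}\}$. A preliminary remark disposes of the degenerate case: if $f(x) = cx$ is $\F_{q^n}$-linear then $\{x,f(x)\}$ is $\F_{q^n}$-dependent, so $|\C_f| = q^n < q^{2n}$ and the Singleton bound \eqref{eq:SingletonBound} forbids $\C_f$ from being MRD with parameters $(n,n,q;n-1)$; on the other side $U_f = \langle (1,c)\rangle_{\F_{q^n}}$ is a single point of weight $n$, which fails to be scattered for $n\geq 2$. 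So I assume $\{x,f(x)\}$ is $\F_{q^n}$-independent, in which case $|\C_f| = q^{2n}$ meets the Singleton bound exactly, and $\C_f$ has the claimed parameters if and only if every nonzero $g_{a,b}$ satisfies $\dim_{\F_q}\ker g_{a,b} \leq 1$.

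On the scattered side, $\dim_{\F_q} U_f = n$, which by Theorem \ref{boundscattered} is the largest possible rank for a scattered linear set in $\PG(1,q^n)$. So $U_f$ is maximum scattered if and only if $w_{L_{U_f}}(P) \leq 1$ for every point $P$ of $\PG(1,q^n)$. The key step is to set up a uniform parameterization of these points by the same pairs $(a,b)$: for $(a,b)\neq (0,0)$, let $P_{a,b} := \langle (a,-b)\rangle_{\F_{q^n}}$, the 1-dimensional $\F_{q^n}$-subspace cut out by $bu+av=0$. As $(a,b)$ runs through $\F_{q^n}^2\setminus\{(0,0)\}$, the point $P_{a,b}$ sweeps out every point of $\PG(1,q^n)$; indeed, the affine points $\langle(1,\mu)\rangle_{\F_{q^n}}$ arise when $a\neq 0$ by choosing $b=-a\mu$, while the infinite point $\langle(0,1)\rangle_{\F_{q^n}}$ arises when $a=0$.

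The bridge is then a direct verification. The $\F_q$-linear isomorphism $\varphi\colon\F_{q^n}\to U_f$, $x\mapsto(x,f(x))$, satisfies $\varphi(x)\in P_{a,b}$ iff $bx+af(x)=0$ iff $x\in\ker g_{a,b}$, so $\varphi$ restricts to an $\F_q$-linear isomorphism $\ker g_{a,b}\cong U_f\cap P_{a,b}$, yielding
\[
\dim_{\F_q}\ker g_{a,b} \;=\; \dim_{\F_q}(U_f\cap P_{a,b}) \;=\; w_{L_{U_f}}(P_{a,b}).
\]
Combining the reformulations: $\C_f$ is MRD with parameters $(n,n,q;n-1)$ iff every nonzero $g_{a,b}$ has kernel of $\F_q$-dimension at most one, iff every point of $\PG(1,q^n)$ has weight at most one in $L_{U_f}$, iff $U_f$ is a maximum scattered $\F_q$-subspace. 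The only delicate point — rather than a genuine obstacle — is checking that the parameterization $(a,b)\mapsto P_{a,b}$ really hits every point of $\PG(1,q^n)$, in particular the infinite point $\langle(0,1)\rangle_{\F_{q^n}}$ (where $g_{0,b}=bx$ has trivial kernel, matching the fact that this point has weight $0$ in $L_{U_f}$), so that the kernel-weight dictionary can be applied uniformly across the whole projective line.
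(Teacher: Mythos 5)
Your proof is correct: the paper states this theorem as a citation of \cite{Sheekey} without reproducing a proof, but your kernel--weight dictionary $\dim_{\F_q}\ker(af(x)+bx)=w_{L_{U_f}}(\langle(a,-b)\rangle_{\F_{q^n}})$ is precisely the mechanism behind Sheekey's argument and is the same correspondence this paper itself deploys in its proof of Theorem \ref{converse} and in the alternative proof of the Blokhuis--Lavrauw bound in Subsection \ref{BLbound}. Your handling of the degenerate case $f(x)=cx$, and the (implicit but easily completed) observation that $d=n$ is excluded by the Singleton bound so that ``all nonzero codewords have kernel of dimension at most one'' already forces $d=n-1$, are both sound.
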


Moreover, in \cite[Proposition 6.1]{CMPZ} the authors prove that if $\C$ is an MRD-code with parameters $(n,n,q;n-1)$ and with left idealiser isomorphic to $\F_{q^n}$, then $\C$ is equivalent to $\C_f$ (cf. \eqref{Cf}), for some $q$-polynomial $f$ and hence $\cC$ defines, by Theorem \ref{4.1}, a scattered $\F_q$-subspace of $\F_{q^n}\times\F_{q^n}$.

Also, we have the following result regarding the equivalence.

\begin{theorem}\label{equivMRD}\cite[Theorem 8]{Sheekey}
If $\C_f$ and $\C_g$ are two MRD-codes arising from maximum scattered subspaces $U_f$ and $U_g$ of $\F_{q^n}\times \F_{q^n}$, then $\C_f$ and $\C_g$ are equivalent if and only if $U_f$ and $U_g$ are  $\Gamma\mathrm{L}(2,q^n)$-equivalent.
\end{theorem}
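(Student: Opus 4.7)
The plan is to establish both directions by translating between the MRD-code equivalence $\C_g=\{h\circ\phi^\tau\circ k:\phi\in\C_f\}$ and the $\Gamma\mathrm{L}(2,q^n)$-relation $M\cdot U_f^\sigma=U_g$, using throughout that each $\F_{q^n}$-linear functional $(u_1,u_2)\mapsto\alpha u_1+\beta u_2$ on $\F_{q^n}^2$ restricts on $U_f$ (parametrised by its first coordinate) to a codeword of $\C_f$.

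\emph{Subspace-equivalence implies code-equivalence.} Given $M=\bigl(\begin{smallmatrix}a&b\\c&d\end{smallmatrix}\bigr)\in\mathrm{GL}(2,q^n)$ and $\sigma\in\mathrm{Aut}(\F_{q^n})$ with $M\cdot U_f^\sigma=U_g$, I would first note that $U_f^\sigma=U_{f^\sigma}$, where $f^\sigma$ denotes $f$ with $\sigma$ applied to its coefficients; then $\psi(z):=az+bf^\sigma(z)$ is an invertible $q$-polynomial satisfying $g\circ\psi=c\cdot\mathrm{id}+d\cdot f^\sigma$. Substituting $y=\psi(z)$ in a generic $\alpha y+\beta g(y)\in\C_g$ and collecting terms yields $(\alpha a+\beta c)z+(\alpha b+\beta d)f^\sigma(z)$, which ranges over all of $\C_f^\sigma$ as $(\alpha,\beta)$ varies (using $\det M\ne 0$). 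Hence $\C_g=\{\phi^\sigma\circ\psi^{-1}:\phi\in\C_f\}$, exhibiting the MRD-equivalence with $h=\mathrm{id}$, $k=\psi^{-1}$, and automorphism $\sigma$.

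\emph{Code-equivalence implies subspace-equivalence.} Assume $\C_g=\{h\circ\phi^\tau\circ k:\phi\in\C_f\}$. The crucial preliminary step is to normalise $h$. Since $\C_f=\langle x,f\rangle_{\F_{q^n}}$ is $\F_{q^n}$-linear on the left, Result \ref{idealis} gives $L(\C_f)=L(\C_g)=\F_{q^n}$ (the scalar multiplications). Code-equivalence conjugates left idealisers, so $h\circ\F_{q^n}\circ h^{-1}=\F_{q^n}$, i.e.\ $h(\lambda x)=\mu(\lambda)h(x)$ for all $\lambda$ and some $\mu\in\mathrm{Aut}(\F_{q^n})$. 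Expanding $h(x)=\sum_j a_j x^{q^j}$ and comparing coefficients of $x^{q^j}$ forces all but one $a_j$ to vanish, so $h(x)=\eta\, x^{q^i}$ for some $\eta\in\F_{q^n}^*$ and integer $i$. Using the identity $(\phi^\tau(x))^{q^i}=\phi^{\tau\sigma_0^i}(x^{q^i})$ with $\sigma_0:x\mapsto x^q$, the Frobenius part is absorbed by replacing $\tau$ with $\tau\sigma_0^i$ and $k$ with the invertible $q$-polynomial $k^{q^i}$, reducing to the case $h(x)=\eta x$.

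With this normalisation, applying the equivalence to the generators $x$ and $f(x)$ of $\C_f$ produces $\gamma_i,\delta_i\in\F_{q^n}$ with $\eta\cdot k(x)=\gamma_1 x+\delta_1 g(x)$ and $\eta\cdot f^\tau(k(x))=\gamma_2 x+\delta_2 g(x)$, i.e.\
\[
\begin{pmatrix}k(x)\\ f^\tau(k(x))\end{pmatrix}=\eta^{-1}\begin{pmatrix}\gamma_1 & \delta_1\\ \gamma_2 & \delta_2\end{pmatrix}\begin{pmatrix}x\\ g(x)\end{pmatrix}.
\]
As $x$ sweeps $\F_{q^n}$, so does $k(x)$ (since $k$ is invertible), hence the left-hand side traces $U_{f^\tau}=U_f^\tau$ while the right-hand side traces the image of $U_g$ under the $2\times 2$ matrix. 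If that matrix were singular, a nontrivial $\F_{q^n}$-combination would give $\alpha\, y+\beta\, f^\tau(y)\equiv 0$ for all $y\in\F_{q^n}$, forcing $f^\tau$ (and hence $f$) to be a scalar multiplication and $U_f$ to be a one-dimensional $\F_{q^n}$-subspace, contradicting that $L_{U_f}$ is maximum scattered. Therefore the matrix lies in $\mathrm{GL}(2,q^n)$ and, together with $\tau$, supplies the desired $\Gamma\mathrm{L}(2,q^n)$-element carrying $U_g$ to $U_f^\tau$. The main obstacle is precisely this normalisation of $h$ to a monomial: without it, the bilateral $\F_q$-linear action on $\C_f$ cannot be recast as an $\F_{q^n}$-linear matrix action on $U_f$.
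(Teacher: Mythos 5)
The paper itself gives no proof of this statement---it is quoted verbatim from Sheekey---so there is nothing internal to compare against; judged on its own, your argument is correct and is essentially the standard one. The forward direction is a direct computation, and the key step in the converse, namely using $L(\C_f)=L(\C_g)=\mathcal{F}_n$ (maximality of the left idealiser of an MRD-code, Result \ref{idealis}) to force $h(x)=\eta x^{q^i}$ and then absorbing the Frobenius into $\tau$ and $k$, is exactly the mechanism of the proof in \cite{Sheekey} and of the later generalisation recorded as Proposition \ref{resvdvs}. Two small remarks. First, Result \ref{idealis} is stated for minimum distance $d>1$, so your appeal to it needs $n\geq 3$; for $n=2$ the theorem is degenerate anyway ($\C_f=\C_g=\F_q^{2\times 2}$, and any two scattered rank-$2$ subspaces of $\F_{q^2}^2$ are $\mathrm{GL}(2,q^2)$-equivalent), so nothing is lost. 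Second, the invertibility of $\psi(z)=az+bf^{\sigma}(z)$ in the forward direction deserves one explicit line: it follows because $U_g$ is a graph over its first coordinate and $M$ maps $U_{f^\sigma}$ bijectively onto $U_g$, so $z\mapsto\psi(z)$ is injective on $\F_{q^n}$.
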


Hence, if $\C_f$ and $\C_g$ are equivalent, it follows that the associated linear sets $L_{U_f}$ and $L_{U_g}$ are $\mathrm{P}\Gamma\mathrm{L}(2,q^n)$-equivalent.
The converse does not hold in general. Indeed, we may consider two non-equivalent generalized Gabidulin codes, namely $\mathcal{G}_{2,s}$ and $\mathcal{G}_{2,t}$, then $U_{x^{q^s}}$ and $U_{x^{q^t}}$ are not $\Gamma\mathrm{L}(2,q^n)$-equivalent but $L_{U_{x^{q^s}}}=L_{U_{x^{q^t}}}$.

\smallskip

More recently, Sheekey and Van de Voorde in \cite{ShVdV} extend Sheekey's construction.
They establish the correspondence between MRD-codes with parameters $(n,n,q;n-k)$ and $\F_{q^n}$-linear on the left, and scattered linear sets with respect to hyperplanes of $\PG(k-1,q^n)$  above defined, see \cite[Definition 14]{ShVdV}.
By \cite[Section 2.7]{Lunardon2017} and \cite{ShVdV} the next result follows, see also \cite[Result 4.7]{CsMPZ2019}.
\begin{result}
	\label{cod}
	$\cC$ is an $\F_q$-linear MRD-code of $\tilde{\cL}_{n,q}$ with minimum distance $n-k+1$ and with left-idealiser isomorphic to $\F_{q^n}$ if and only if up to equivalence
	\[\cC=\la f_1(x),\ldots,f_k(x)\ra_{\F_{q^n}}\]
	for some $f_1,f_2,\ldots,f_k \in \tilde{\cL}_{n,q}$	and the $\F_q$-subspace
	\[U_{\cC}=\{(f_1(x),\ldots,f_k(x)) \colon x\in \F_{q^n}\}\]
	is a maximum $(k-1)$-scattered $\F_q$-subspace of $\F_{q^n}^k$.
\end{result}

Sheekey and Van de Voorde generalize Theorem \ref{equivMRD} in \cite[Proposition 3.5]{ShVdV}.

\begin{proposition}
\label{resvdvs}
Let $\cC$ and $\cC'$ be two $\F_q$-linear MRD-codes of $\tilde{\cL}_{n,q}$ with minimum distance $n-k+1$ and with left-idealisers isomorphic to $\F_{q^n}$.
Then $U_{\cC}$ and $U_{\cC'}$ are $\G(k,q^n)$-equivalent if and only if $\cC$ and $\cC'$ are equivalent.
\end{proposition}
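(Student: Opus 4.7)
By Result~\ref{rightvectorspace} (combined with the transitivity of equivalence) I may assume $L(\cC) = L(\cC') = \mathcal{F}_n := \{\alpha x : \alpha \in \F_{q^n}\}$, and Result~\ref{cod} then lets me fix $\F_{q^n}$-bases $\cC = \langle f_1,\ldots,f_k\rangle_{\F_{q^n}}$ and $\cC' = \langle g_1,\ldots,g_k\rangle_{\F_{q^n}}$ with associated subspaces $U_\cC = \{(f_l(x))_l : x \in \F_{q^n}\}$ and $U_{\cC'} = \{(g_l(x))_l : x \in \F_{q^n}\}$ of $\F_{q^n}^k$. A change of basis of $\cC'$ rescales $U_{\cC'}$ by an element of $\mathrm{GL}(k,q^n) \subseteq \G(k,q^n)$, so the $\G(k,q^n)$-orbit of $U_{\cC'}$ is intrinsic to $\cC'$.

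For the direction ($\Leftarrow$), suppose $\varphi \in \G(k,q^n)$ satisfies $\varphi(U_\cC) = U_{\cC'}$. I decompose $\varphi$ as $v \mapsto v^\tau A$ with $\tau \in \mathrm{Aut}(\F_{q^n})$ acting componentwise and $A = (a_{ij}) \in \mathrm{GL}(k,q^n)$. Substituting $y = x^\tau$ expresses $U_{\cC'}$ as $\{(\sum_i a_{ij} f_i^\tau(y))_j : y \in \F_{q^n}\}$, and equating this $\F_q$-linear parametrisation of $U_{\cC'}$ with $\{(g_j(x))_j : x \in \F_{q^n}\}$ produces an invertible $q$-polynomial $\rho$ such that $g_j \circ \rho = \sum_i a_{ij} f_i^\tau$ for every $j$. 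The invertibility of $A$ then promotes this coordinate-wise identity to the $\F_{q^n}$-linear equality $\cC' \circ \rho = \cC^\tau$, so $\cC' = \{f^\tau \circ \rho^{-1} : f \in \cC\}$, which is an equivalence of codes.

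For the direction ($\Rightarrow$), suppose $\cC' = \{h \circ f^\sigma \circ \pi : f \in \cC\}$ with $h,\pi$ invertible $q$-polynomials and $\sigma$ a field automorphism. Conjugation of left-idealisers by the equivalence gives $\mathcal{F}_n = L(\cC') = h\,\mathcal{F}_n\, h^{-1}$, so $h$ lies in the normaliser of $\mathcal{F}_n$ inside $\mathrm{GL}_{\F_q}(\F_{q^n})$; this normaliser equals $\G(1,q^n)$, so $h(x) = c x^{q^j}$ for some $c \in \F_{q^n}^*$ and $0 \leq j \leq n-1$. Exploiting the Frobenius-semilinearity of $h$ one checks that $\{h \circ f_l^\sigma \circ \pi\}_{l=1}^k$ is an $\F_{q^n}$-basis of $\cC'$; taking it to represent $U_{\cC'}$, substituting $y = \pi(x)$, and applying the identity $f_l^\sigma(y) = f_l(y^{\sigma^{-1}})^\sigma$ yields
\[
U_{\cC'} = \{(c\, u_l^{\sigma q^j})_l : (u_l)_l \in U_\cC\},
\]
which is the image of $U_\cC$ under the $\G(k,q^n)$-element given by the componentwise automorphism $\sigma q^j$ followed by scalar multiplication by $c$.

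I expect the main obstacle in the backward direction: pinning down $h(x) = c x^{q^j}$ hinges on identifying the normaliser of $\mathcal{F}_n$ inside $\mathrm{GL}_{\F_q}(\F_{q^n})$ with $\G(1,q^n)$, and once this shape of $h$ is fixed, the Frobenius concealed in $h$, the automorphism $\sigma$, and the right-composition by $\pi$ must be tracked together so that they collapse into a single componentwise semilinear map on $U_\cC$. Each manipulation is individually routine, but the index bookkeeping that merges the Frobenius contributions into a single $\sigma q^j$ is where one must be most careful.
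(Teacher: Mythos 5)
Your argument is correct, but note that the paper itself gives no proof of Proposition~\ref{resvdvs}: it is imported verbatim from Sheekey--Van de Voorde \cite[Proposition 3.5]{ShVdV}, so there is no internal proof to compare against. What you have written is essentially the standard argument (the natural generalisation of Sheekey's proof of Theorem~\ref{equivMRD} for $k=2$), and all the steps check out: the reduction to $L(\cC)=L(\cC')=\mathcal{F}_n$ via Result~\ref{rightvectorspace}, the observation that a change of $\F_{q^n}$-basis of the code acts on $U_{\cC}$ by right multiplication by a matrix of $\mathrm{GL}(k,q^n)$, the extraction of the invertible $q$-polynomial $\rho$ from two injective $\F_q$-linear parametrisations of the same $n$-dimensional subspace (injectivity holding because $U_{\cC}$ and $U_{\cC'}$ have rank $n$), and the bookkeeping in the converse direction that merges $h$, $\sigma$ and $\pi$ into a single semilinear map with companion automorphism $\sigma q^j$ and matrix $cI_k$. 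Two small remarks. First, your labels $(\Leftarrow)$ and $(\Rightarrow)$ are swapped relative to the statement, which is harmless. Second, the only step you assert rather than prove is that the normaliser of $\mathcal{F}_n$ in the group of invertible $q$-polynomials consists of the maps $x\mapsto cx^{q^j}$; this is the classical normaliser-of-a-Singer-cycle fact, and it also follows in one line from your own setup: conjugation by $h$ induces a ring automorphism of $\mathcal{F}_n\cong\F_{q^n}$ fixing $\F_q$ (because $h$ is $\F_q$-linear), hence equals $\alpha\mapsto\alpha^{q^j}$ for some $j$, so $h(\alpha x)=\alpha^{q^j}h(x)$ for all $\alpha,x$ and therefore $h(x)=h(1)x^{q^j}$. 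Adding that sentence, or a citation, would make the proof self-contained.
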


In contrast to the $k=2$ case, when $k>2$ the equivalence of MRD-codes coincides with the projective equivalence of the corresponding linear sets.

\begin{theorem}\cite[Theorem 4.10]{CsMPZ2019}
Let $\cC$ and $\cC'$ be two $\F_q$-linear MRD-codes of $\tilde {\cL}_{n,q}$ with minimum distance $n-k+1$, $k>2$, and with left-idealisers isomorphic to $\F_{q^n}$.
Then the linear sets $L_{U_{\cC}}$ and $L_{U_{\cC'}}$ are $\mathrm{P}\Gamma\mathrm{L}(k,q^n)$-equivalent
if and only if $\cC$ and $\cC'$ are equivalent.
\end{theorem}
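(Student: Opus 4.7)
The direction ``$\Leftarrow$'' is immediate from Proposition~\ref{resvdvs}: $\cC \sim \cC'$ produces a $\Gamma\mathrm{L}(k,q^n)$-equivalence between $U_{\cC}$ and $U_{\cC'}$, which descends to a $\mathrm{P}\Gamma\mathrm{L}(k,q^n)$-equivalence of the associated linear sets. For the converse, let $\varphi \in \mathrm{P}\Gamma\mathrm{L}(k,q^n)$ with $\varphi(L_{U_{\cC}}) = L_{U_{\cC'}}$, and fix a lift $\tilde\varphi \in \Gamma\mathrm{L}(k,q^n)$. Then $W := \tilde\varphi(U_{\cC})$ is an $\F_q$-subspace of $\F_{q^n}^k$ of $\F_q$-dimension $n$ with $L_W = L_{U_{\cC'}}$, still maximum $(k-1)$-scattered because this property is $\Gamma\mathrm{L}$-invariant. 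The proof is complete once one shows that $W$ and $U_{\cC'}$ lie in the same $\Gamma\mathrm{L}(k,q^n)$-orbit: composing that equivalence with $\tilde\varphi$ then yields a $\Gamma\mathrm{L}$-equivalence between $U_{\cC}$ and $U_{\cC'}$, and Proposition~\ref{resvdvs} delivers $\cC \sim \cC'$.

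The whole theorem thus reduces to the following simplicity claim: \emph{if $U$ and $W$ are maximum $(k-1)$-scattered $\F_q$-subspaces of $V := \F_{q^n}^k$ with $L_U = L_W$ and $k > 2$, then $W = \lambda U$ for some $\lambda \in \F_{q^n}^*$.} To prove it, I would pick an $\F_{q^n}$-basis $u_0, \ldots, u_{k-1}$ of $V$ with each $u_i \in U$ --- possible because the spanning clause in the definition of $(k-1)$-scattered forces $L_U$ to generate $\PG(k-1,q^n)$ --- and, for each $i$, choose $\lambda_i \in \F_{q^n}^*$ (unique modulo $\F_q^*$) with $\lambda_i u_i \in W$. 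Applying the scalar $\mathrm{GL}(k,q^n)$-map $x \mapsto \lambda_0^{-1}x$ to $W$ reduces to the case $u_0 \in W$, and the task becomes to show $\lambda_i \in \F_q^*$ for every $i \geq 1$. Once this holds, running the same procedure with $u_0$ and an arbitrary further $u = \sum_i c_i u_i \in U$ in place of $u_0, u_i$ yields $u \in W$; thus $U \subseteq W$ and $U = W$ follows by dimension.

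The main obstacle is therefore forcing $\lambda_i \in \F_q^*$, and this is where the hypothesis $k > 2$ becomes essential. For each $i \geq 1$, choose a hyperplane $\Pi_i$ of $V$ that is $\F_{q^n}$-spanned by $k-1$ of the basis vectors and contains both $u_0$ and $u_i$. The $(k-1)$-scattered hypothesis gives $w_{L_U}(\Pi_i) \leq k-1$, while $U \cap \Pi_i$ already contains the $(k-1)$-dimensional $\F_q$-span of those $k-1$ basis vectors; hence $U \cap \Pi_i$ equals that $\F_q$-span, and likewise $W \cap \Pi_i$ equals the corresponding $\F_q$-span of the vectors $\lambda_j u_j$. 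Applying $L_U \cap \Pi_i = L_W \cap \Pi_i$ to the point $\langle u_0 + u_i\rangle_{\F_{q^n}}$ produces $(b_j) \in \F_q^{k-1}$ and $t \in \F_{q^n}^*$ with $t(u_0+u_i) = \sum_j b_j \lambda_j u_j$; the $\F_{q^n}$-independence of the $u_j$'s forces $b_j = 0$ for basis indices $j \neq 0, i$, while $t = b_0$ and $t = b_i\lambda_i$ give $\lambda_i = b_0/b_i \in \F_q^*$. The subtle point --- and the reason the theorem genuinely fails at $k = 2$ --- is that a proper hyperplane of $V$ containing both $u_0$ and $u_i$ exists precisely when $k > 2$; for $k = 2$ one has $\langle u_0, u_i\rangle_{\F_{q^n}} = V$ and no nontrivial hyperplane weight bound is available, which is exactly the $\Gamma\mathrm{L}$-class $>1$ phenomenon on $\PG(1,q^n)$ recalled in the introduction.
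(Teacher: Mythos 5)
The survey states this theorem only as a quotation of \cite[Theorem 4.10]{CsMPZ2019} and contains no proof of it, so there is nothing in the paper itself to compare against; judged on its own, your argument is essentially correct and is in the spirit of the cited source: reduce, via Proposition \ref{resvdvs} and a lift of the collineation to $\Gamma\mathrm{L}(k,q^n)$, to the ``simplicity'' statement that two maximum $(k-1)$-scattered subspaces $U,W$ of $\F_{q^n}^k$ with $L_U=L_W$ and $k>2$ satisfy $W=\lambda U$. Your hyperplane argument for that claim is sound: the $(k-1)$-scattered condition applied to the hyperplane spanned by $k-1$ of the $u_j$'s pins down $U\cap\Pi_i$ and $W\cap\Pi_i$ exactly, and evaluating the common linear set at $\langle u_0+u_i\rangle_{\F_{q^n}}$ forces $\lambda_i\in\F_q^*$. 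Two small points deserve tightening. First, the existence and uniqueness (mod $\F_q^*$) of $\lambda_i$ uses that a maximum $(k-1)$-scattered subspace is in particular scattered; this is true but not immediate from the definition as given (if a point had weight $\ge 2$, extend it by $k-2$ further points of $L_U$ to a hyperplane of weight $\ge k$, contradiction), and should be said. Second, the final step ``running the same procedure with $u_0$ and an arbitrary $u=\sum_i c_iu_i\in U$'' is stated too loosely: for general $u$ there is no hyperplane spanned by basis vectors containing $u$. What you actually need is to take, for $u\notin\langle u_0\rangle_{\F_{q^n}}$, the hyperplane $\Pi=\langle u_0,u,u_{i_3},\dots,u_{i_{k-1}}\rangle_{\F_{q^n}}$ obtained by completing $\{u_0,u\}$ with $k-3$ of the (already shown to lie in $W$) basis vectors; then $U\cap\Pi$ and $W\cap\Pi$ are again forced to be the obvious $(k-1)$-dimensional $\F_q$-spans and the point $\langle u_0+u\rangle_{\F_{q^n}}$ gives $\mu\in\F_q^*$ for the scalar $\mu$ with $\mu u\in W$ (the case $u\in\langle u_0\rangle_{\F_{q^n}}$ being trivial since that point has weight one). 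With these repairs the proof is complete, and your closing remark correctly identifies why the statement genuinely fails at $k=2$, consistent with the $\mathcal{G}_{2,s}$ versus $\mathcal{G}_{2,t}$ example recalled after Theorem \ref{equivMRD}.
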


\subsection{Non-square case}

We extend the link found by Sheekey in \cite{Sheekey} showing that MRD-codes of dimension $rn$ with parameters $\displaystyle \left(\frac{rn}2,n,q;n-1\right)$ and right idealiser isomorphic to $\F_{q^n}$ can be constructed from every maximum scattered $\F_q$--subspace of $V(r,q^n)$, $rn$ even and conversely.

In \cite[Theorem 3.2]{CSMPZ2016}, jointly with Csajb\'ok and Marino, we propose the following construction of rank metric codes starting from an $\F_q$-subspace $U$ of $V=V(r,q^n)$.

\bigskip

\textbf{Construction}\\
\noindent Let $U$ be an $\F_q$-subspace with dimension $\frac{rn}2$ of $V=V(r,q^n)$ where $rn$ is even.
Let $G\colon V \rightarrow W$, with $W=V(\frac{rn}2,q)$, be an $\F_q$-linear function such that $\ker G=U$.
For $\mathbf{v}\in V$, put
\[ \tau_{\mathbf{v}} \colon \lambda \in \F_{q^n} \mapsto \lambda \mathbf{v} \in V, \]
and also
\[ i:=\max\{\dim_{\F_q}(U\cap\langle\mathbf{v}\rangle_{\F_{q^n}}) \colon \mathbf{v}\in V\}. \]
If $i<n$ then
\begin{equation}\label{eq:CUG}
\mathcal{C}_{U,G}:=\{G\circ \tau_{\mathbf{v}} \colon \mathbf{v}\in V\}
\end{equation}
is an $\F_q$-linear rank metric code of dimension $rn$, with parameters $(rn/2,n,q;n-i)$ and with $R(\mathcal{C}_{U,G})=\mathcal{F}_n\simeq \F_{q^n}$.
Note that $G\circ \tau_{\mathbf{v}}\colon \F_{q^n} \rightarrow W$.
Therefore, $\cC_{U,G}$ is an MRD-code if and only if $i=1$, more precisely

\begin{theorem}\cite{CSMPZ2016}
\label{construction}
The rank metric code $\cC_{U,G}$ is an MRD-code if and only if $U$ is a maximum scattered $\F_q$-subspace of $V$.
\end{theorem}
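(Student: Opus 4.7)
The plan is to prove the theorem by identifying the minimum distance of $\cC_{U,G}$ in terms of $i$, computing the cardinality of $\cC_{U,G}$, and then checking when the Singleton-like bound \eqref{eq:SingletonBound} is attained; the scattered condition will then match the equality case exactly.

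First I would analyze a single codeword. Since $\ker G = U$, for any $\mathbf{v}\in V$ the kernel of $G\circ\tau_{\mathbf{v}}\colon\F_{q^n}\to W$ is $\{\lambda\in\F_{q^n}\colon\lambda\mathbf{v}\in U\}$. For $\mathbf{v}\neq\mathbf{0}$ the map $\lambda\mapsto\lambda\mathbf{v}$ is an $\F_q$-linear bijection onto $\langle\mathbf{v}\rangle_{\F_{q^n}}$, so $\dim_{\F_q}\ker(G\circ\tau_{\mathbf{v}})=\dim_{\F_q}(U\cap\langle\mathbf{v}\rangle_{\F_{q^n}})$. Hence $\mathrm{rk}(G\circ\tau_{\mathbf{v}})=n-\dim_{\F_q}(U\cap\langle\mathbf{v}\rangle_{\F_{q^n}})$. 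Taking the minimum of this rank over all $\mathbf{v}\neq\mathbf{0}$ (and using that $\cC_{U,G}$ is $\F_q$-linear, so minimum distance equals minimum nonzero weight), I get $d(\cC_{U,G})=n-i$.

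Next I would compute $|\cC_{U,G}|$. The assignment $\mathbf{v}\mapsto G\circ\tau_{\mathbf{v}}$ is $\F_q$-linear on $V$, with kernel $\{\mathbf{v}\in V\colon \langle\mathbf{v}\rangle_{\F_{q^n}}\subseteq U\}$. Under the hypothesis $i<n$, no nonzero $\mathbf{v}$ satisfies $\langle\mathbf{v}\rangle_{\F_{q^n}}\subseteq U$, so the map is injective and $|\cC_{U,G}|=q^{rn}$. Now apply the Singleton-like bound with matrix dimensions $m=rn/2$ and $n$; since $r\geq 2$ we have $\max\{m,n\}=rn/2$ and $\min\{m,n\}=n$, so
\[
|\cC_{U,G}|\leq q^{(rn/2)(n-d+1)}=q^{(rn/2)(i+1)}.
\]
Equality $q^{rn}=q^{(rn/2)(i+1)}$ holds precisely when $i=1$, which is exactly the condition for $\cC_{U,G}$ to be MRD.

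Finally, I would translate $i=1$ into the geometric statement. Because $U\neq\{\mathbf{0}\}$ we always have $i\geq 1$, so $i=1$ is equivalent to $\dim_{\F_q}(U\cap\langle\mathbf{v}\rangle_{\F_{q^n}})\leq 1$ for every $\mathbf{v}\in V$, which by definition means that every point of $L_U$ has weight one, i.e., $L_U$ is scattered. Since $\dim_{\F_q}U=rn/2$ already meets the bound of Theorem \ref{boundscattered}, $U$ is scattered if and only if $U$ is maximum scattered, completing both directions. There is no substantial obstacle here: the only slightly delicate points are the bijective identification that produces the formula $\mathrm{rk}(G\circ\tau_{\mathbf{v}})=n-\dim_{\F_q}(U\cap\langle\mathbf{v}\rangle_{\F_{q^n}})$ and the correct bookkeeping of $\max\{m,n\}$ and $\min\{m,n\}$ in the Singleton-like bound; once these are in place the equivalence is immediate.
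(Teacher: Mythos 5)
Your proposal is correct and follows essentially the same route as the paper: it verifies that $\cC_{U,G}$ has $q^{rn}$ codewords and parameters $\left(\frac{rn}{2},n,q;n-i\right)$ via the identity $\dim_{\F_q}\ker(G\circ\tau_{\mathbf{v}})=\dim_{\F_q}(U\cap\langle\mathbf{v}\rangle_{\F_{q^n}})$, and then reads off from the Singleton-like bound that equality holds precisely when $i=1$, i.e.\ when $U$ is (maximum) scattered. This is exactly the computation underlying the paper's Construction paragraph and its later alternative proof of the Blokhuis--Lavrauw bound.
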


We are now able to prove that each MRD-code with the same parameters as the above defined code produces a maximum scattered $\F_q$-subspace of $V=V(r,q^n)$.

\begin{theorem}\label{converse}
Let $\C$ be an $\F_q$-linear MRD-code with parameters $(t,n,q;n-1)$ with $t\geq n$ and $R(\C)\simeq \F_{q^n}$.
If $r=\dim_{R(\C)} \C$, then $rn$ is even, $\displaystyle t=\frac{rn}2$ and there exists a maximum scattered $\F_q$-subspace $U$ of $V(r,q^n)$ such that the code $\C_{U,G}$ defined in \eqref{eq:CUG} is equal to $\C$, for some $\F_q$-linear function $G$ as above.
\end{theorem}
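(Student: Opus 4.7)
The strategy is to exploit the $\F_{q^n}$-vector space structure that the hypothesis $R(\C)\simeq\F_{q^n}$ induces on $\C$, and to manufacture $G$ from an $\F_{q^n}$-basis of $\C$. First I identify the codewords of $\C$ with $\F_q$-linear maps $\F_{q^n}\to E$, where $\F_{q^n}$ is the domain on which the right idealiser acts by the scalar multiplications $\tau_\alpha\colon x\mapsto\alpha x$ and $E$ is an $\F_q$-space with $\dim_{\F_q}E=t$. Applying the Singleton bound \eqref{eq:SingletonBound} to the MRD-code $\C$, together with $t\geq n$, gives $|\C|=q^{2t}$, so $\dim_{\F_q}\C=2t$. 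Since $R(\C)\simeq\F_{q^n}$ acts on $\C$ from the right by $\alpha\cdot C:=C\circ\tau_\alpha$, the space $\C$ becomes an $\F_{q^n}$-vector space of dimension $r=\dim_{R(\C)}\C$, and $rn=2t$ forces $rn$ even and $t=rn/2$.

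Next I fix an $\F_{q^n}$-basis $C_1,\ldots,C_r$ of $\C$, set $V=\F_{q^n}^r=V(r,q^n)$, and define the $\F_q$-linear map
\[
G\colon V\longrightarrow E,\qquad (\lambda_1,\ldots,\lambda_r)\longmapsto\sum_{i=1}^{r}C_i(\lambda_i).
\]
For $\mathbf{v}=(\lambda_1,\ldots,\lambda_r)\in V$ and every $\mu\in\F_{q^n}$ a direct computation gives
\[
(G\circ\tau_{\mathbf{v}})(\mu)=\sum_{i=1}^{r}C_i(\mu\lambda_i)=\sum_{i=1}^{r}(C_i\circ\tau_{\lambda_i})(\mu)=\Bigl(\sum_{i=1}^{r}\lambda_i\cdot C_i\Bigr)(\mu).
\]
Hence the map $\Phi\colon V\to\C$ defined by $\mathbf{v}\mapsto G\circ\tau_{\mathbf{v}}$ coincides with $(\lambda_1,\ldots,\lambda_r)\mapsto\sum_i\lambda_i\cdot C_i$; since $\{C_i\}$ is an $\F_{q^n}$-basis of $\C$, $\Phi$ is an $\F_{q^n}$-linear bijection, and therefore $\C=\{G\circ\tau_{\mathbf{v}}\colon \mathbf{v}\in V\}=\C_{U,G}$ with $U:=\ker G$.

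Finally I verify that $U$ is maximum scattered. Bijectivity of $\Phi$ gives $\Phi(\mathbf{v})\neq 0$ for every nonzero $\mathbf{v}\in V$, so the MRD hypothesis yields $\mathrm{rk}(G\circ\tau_{\mathbf{v}})\geq n-1$. Since $\mu\mapsto\mu\mathbf{v}$ is an $\F_q$-isomorphism between $\ker(G\circ\tau_{\mathbf{v}})$ and $U\cap\langle\mathbf{v}\rangle_{\F_{q^n}}$, one has $\mathrm{rk}(G\circ\tau_{\mathbf{v}})=n-\dim_{\F_q}(U\cap\langle\mathbf{v}\rangle_{\F_{q^n}})$; consequently $\dim_{\F_q}(U\cap\langle\mathbf{v}\rangle_{\F_{q^n}})\leq 1$ for every nonzero $\mathbf{v}$, proving $U$ is scattered. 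Rank--nullity gives $\dim_{\F_q}U=rn-\dim_{\F_q}\mathrm{Im}(G)\geq rn-t=rn/2$, and Theorem~\ref{boundscattered} forces $\dim_{\F_q}U\leq rn/2$; equality then yields that $U$ is maximum scattered, $G$ is surjective, and the construction is valid (with parameter $i=1<n$). The main hurdle is recognizing the $\F_{q^n}$-module structure on $\C$ and rigging $G$ so that $G\circ\tau_{\mathbf{v}}$ reads off the basis expansion of the codewords; beyond this, every remaining step is a routine translation between ranks of codewords and dimensions of intersections $U\cap\langle\mathbf{v}\rangle_{\F_{q^n}}$, with Theorem~\ref{boundscattered} as the only external input.
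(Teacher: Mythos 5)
Your proof is correct, and its skeleton is the same as the paper's: use $R(\C)\simeq\F_{q^n}$ to view $\C$ as an $r$-dimensional $\F_{q^n}$-space, get $nr=2t$ from the Singleton bound, and derive scatteredness of $U$ from the fact that every nonzero codeword has rank at least $n-1$. Your $G$ is in fact the paper's map in coordinates: the paper takes $G\colon g\in\C\mapsto g(1)$ and $U=\{f\in\C\colon f(1)=0\}$, and under your isomorphism $\Phi$ one has $G(\mathbf{v})=\Phi(\mathbf{v})(1)$, so the two constructions of $(U,G)$ coincide. The one step where you genuinely diverge is the proof that $\dim_{\F_q}U=\frac{rn}{2}$. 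The paper counts the codewords of rank $n-1$: it shows they are exactly $\bigcup_{f\in U\setminus\{0\}}\{f\circ\tau_\alpha\colon\alpha\in\F_{q^n}^*\}$, so $A_{n-1}(\C)=(q^n-1)\frac{|U^*|}{q-1}$, and then compares with the MRD weight distribution \eqref{numberofmin} of Theorem~\ref{weightdistribution} (applied to $\C^\top$) to get $|U^*|=q^{rn/2}-1$. You instead combine the rank--nullity lower bound $\dim_{\F_q}U\geq rn-t=\frac{rn}{2}$ with the Blokhuis--Lavrauw upper bound of Theorem~\ref{boundscattered}. Your route is shorter and avoids the weight distribution entirely, at the price of importing Theorem~\ref{boundscattered} as an external input (harmless here, and not circular, since the paper's Subsection on the alternative proof of that bound only treats the forward construction); the paper's counting argument is self-contained given Theorem~\ref{weightdistribution} and has the small bonus of exhibiting $|U|$ explicitly without invoking the geometric bound. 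One presentational point: like the paper, you should say explicitly that the normalisation ``$R(\C)$ acts by the scalar maps $\tau_\alpha$'' is achieved up to code equivalence via Result~\ref{idealis}, since $R(\C)\simeq\F_{q^n}$ as an abstract field does not by itself fix the embedding in $\F_q^{n\times n}$.
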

\begin{proof}
We may assume, without loss of generality, that $\C$ is a set of $\F_q$-linear maps defined from $\F_{q^n}$ to $V(t,q)$ and, up to equivalence, by Result \ref{idealis} we may suppose that $R(\C)=\mathcal{F}_n$, i.e. for each $f \in \C$ and $\alpha \in\F_{q^n}$ it follows that
\[f\circ \tau_\alpha \in \C,\]
where $\tau_{\alpha}\colon x \in \F_{q^n} \mapsto \alpha x \in \F_{q^n}$.
Now, if $r$ is the dimension of $\C$ as a right vector space over $R(\C)$, then $|\C|=q^{nr}$ and hence $\dim_{\F_q} \C=nr$.
By the Singleton bound
\[ \dim_{\F_q} \C=nr = 2t, \]
then $nr$ is even and $\displaystyle t=\frac{rn}2$.
Since the minimum distance is $n-1$, for each $\F_q$-linear map $f\colon \F_{q^n} \rightarrow V(\frac{rn}2,q)$ in $\C\setminus \{0\}$ it results that
\[ \dim_{\F_q} \ker f \leq 1. \]
Now consider $\C=V(r,q^n)$ as a vector space over $\mathcal{F}_n$ with respect to the right composition  and let
\begin{equation}\label{subspace}
U=\{f \in \C \colon f(1)=0\}.
\end{equation}
We prove that $U$ is a scattered $\F_q$-subspace of $V(r,q^n)$.
Indeed, if $f \in U\setminus\{0\}$ and $g \in (\langle f\rangle_{\F_{q^n}}\cap U)\setminus\{0\}$, then $f(1)=0$ and $g=f\circ \tau_\alpha$ with $\alpha \in \F_{q^n}$. Hence $g(1)=f(\alpha)=0$, i.e. $1,\alpha \in \ker f$ and since $\dim_{\F_q} \ker f \leq 1$, it follows that $1$ and $\alpha$ are $\F_q$-dependent.
Hence $\dim_{\F_q} \langle f \rangle_{\F_{q^n}} \cap U=1$, i.e. $U$ is scattered.
Also, if $g \in \C$ with $\dim_{\F_q} \ker g =1$, then there exists $\lambda \in \F_{q^n}^*$ such that $g\circ \tau_{\lambda} \in U$.
Indeed, since $\dim_{\F_q} \ker g =1$ there exists $\lambda \in \F_{q^n}^*$ such that $g(\lambda)=g\circ \tau_{\lambda} (1)=0$.
Therefore, the set of all the elements of $\C$ with $1$-dimensional kernel is equal to the set $\displaystyle \bigcup_{f \in U\setminus\{0\}} \langle f\rangle_{\F_{q^n}}^*$,
where $\langle f\rangle_{\F_{q^n}}^*=\{f \circ \tau_\alpha \colon \alpha \in \F_{q^n}^*\}$.
Hence,
\[ A_{n-1}(\C)=(q^n-1)\frac{ |U^*|}{q-1},\]
where $A_{n-1}(\C)$ is the number of maps in $\C$ with rank $n-1$.
By evaluating $A_{n-1}$ using \eqref{numberofmin} of Theorem \ref{weightdistribution} applied to $\C^\top$, we get that
\[ A_{n-1}(\C)=(q^{\frac{rn}2}-1)\frac{q^n-1}{q-1} \]
and then $|U^*|=q^{\frac{rn}2}-1$, i.e. $U$ is a maximum scattered subspace of $\cC$.
Now, let
\[ G\colon g \in \cC \mapsto g(1) \in V\left(\frac{rn}2,q\right), \]
then $\ker G=U$ and
\[ \mathcal{C}_{U,G}=\{G\circ \tau_{\mathbf{v}} \colon \mathbf{v}\in V(r,q^n)\}=\cC. \]
Then the assert follows.
\end{proof}

Hence we have the following correspondence.

\begin{theorem}
Let $U$ be a scattered $\F_q$-subspace with dimension $rn/2$ of the $r$-dimensional $\F_{q^n}$-space $V=V(r,q^n)$, then the rank metric code $\cC_{U,G}$ (cf. \eqref{eq:CUG}) is an MRD-code of dimension $rn$, with parameters $(rn/2,n,q;n-1)$ and with $R(\C_{U,G})=\mathcal{F}_n\simeq \F_{q^n}$.
Conversely, if $\C$ is an $\F_q$-linear MRD-code with parameters $(t,n,q;n-1)$ with $t\geq n$, $R(\C)\simeq \F_{q^n}$ and $r=\dim_{R(\C)} \C$, then $rn$ is even, $\displaystyle t=\frac{rn}2$ and there exists a maximum scattered $\F_q$-subspace $U$ of $V(r,q^n)$ such that the code $\C_{U,G}$ defined in \eqref{eq:CUG} is equal to $\C$, for some $\F_q$-linear function $G$ as above.
\end{theorem}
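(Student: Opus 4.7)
The plan is to recognise this theorem as the direct consolidation of the two results established above: the forward implication is precisely Theorem \ref{construction}, and the converse is exactly Theorem \ref{converse}. No additional argument is needed beyond invoking these two statements in sequence.

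For the forward direction I would simply observe that a maximum scattered $\F_q$-subspace $U$ of $V(r,q^n)$ of dimension $rn/2$ satisfies the hypothesis $i = 1$ of Theorem \ref{construction}, since being scattered means that every point of the associated linear set has weight one, i.e.\ $\dim_{\F_q}(U \cap \langle \mathbf{v}\rangle_{\F_{q^n}}) \leq 1$ for every $\mathbf{v} \in V$. Theorem \ref{construction} then yields that $\cC_{U,G}$ is an MRD-code; the parameters $(rn/2, n, q; n-1)$ and the right idealiser $R(\cC_{U,G}) = \mathcal{F}_n \simeq \F_{q^n}$ are built into the construction itself, because the codewords are the maps $G \circ \tau_{\mathbf{v}}$, which are stable under precomposition with $\F_{q^n}$-scalars acting on $\mathbf{v}$.

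For the converse I would cite Theorem \ref{converse} verbatim. The underlying argument, which is the only substantive step in the entire statement, runs as follows: reduce to $R(\cC) = \mathcal{F}_n$ up to equivalence by Result \ref{idealis}, deduce $t = rn/2$ from the Singleton bound applied to $|\cC| = q^{rn}$, view $\cC$ itself as an $r$-dimensional right $\F_{q^n}$-vector space, and define $U := \{f \in \cC : f(1) = 0\}$ together with the evaluation $G : g \mapsto g(1)$, so that $\ker G = U$ and $\cC_{U,G} = \cC$. That $U$ is scattered is immediate from the fact that minimum distance $n-1$ forces $\dim_{\F_q} \ker f \leq 1$ for every nonzero $f \in \cC$. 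The main obstacle is confirming the dimension count $\dim_{\F_q} U = rn/2$, i.e.\ the \emph{maximality} of $U$; the cleanest route is to enumerate the rank-$(n-1)$ elements of $\cC$ in two ways—partitioning them into $\F_{q^n}$-orbits under $f \mapsto f \circ \tau_\alpha$ on the one hand, and reading them off from the weight distribution formula \eqref{numberofmin} of Theorem \ref{weightdistribution} applied to the adjoint $\cC^\top$ on the other—and equating the two counts to extract $|U \setminus \{0\}| = q^{rn/2} - 1$.
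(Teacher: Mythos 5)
Your proposal is correct and matches the paper exactly: the paper presents this theorem as an immediate consolidation of Theorem \ref{construction} (forward direction, with the parameters and right idealiser built into the construction \eqref{eq:CUG}) and Theorem \ref{converse} (the converse), and your sketch of the converse---reduction to $R(\cC)=\mathcal{F}_n$, the Singleton bound giving $t=rn/2$, the subspace $U=\{f\in\cC : f(1)=0\}$ with $G:g\mapsto g(1)$, scatteredness from $\dim_{\F_q}\ker f\leq 1$, and maximality via the two-way count of rank-$(n-1)$ codewords against \eqref{numberofmin} applied to $\cC^\top$---is precisely the paper's argument.
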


\smallskip

We will show an example of the above construction.
Let $r$ be odd and $n=2t$. Some of the known families of maximum scattered $\F_q$-subspaces are given in the $r$-dimensional $\F_{q^{2t}}$-space $V=\F_{q^{2rt}}$ and they are of the form
\begin{equation}
U_f:=\{x\omega + f(x) \colon x\in \F_{q^{rt}}\},
\end{equation}
with $\omega \in \F_{q^{2t}} \setminus \F_{q^t}$ such that $\omega^2=\omega A_0+A_1$, $A_0,A_1\in\F_{q^t}$ and $f$ is an $\F_q$-linear transformation of $\F_{q^{rt}}$.
Then Construction \eqref{eq:CUG} gives  $\F_q$-linear MRD-codes with parameters $(rt,2t,q;2t-1)$.
Indeed, in this case $\{\omega,1\}$ is an $\F_{q^t}$-basis of $\F_{q^{2t}}$ and also an $\F_{q^{rt}}$-basis of $\F_{q^{2rt}}$, hence we can write any element $\lambda\in\F_{q^{2t}}$ as $\lambda= x \omega + y$, with $x, y \in\F_{q^t}$ and any element $v \in \F_{q^{2rt}}$ as $v=v_0\omega+v_1$ with $v_0,v_1 \in \F_{q^{rt}}$.\\
We fix $G \colon \F_{q^{2rt}} \rightarrow \F_{q^{rt}}$ as the map $x \omega + y \mapsto f(x)-y$.
For each $v=v_0 \omega + v_1 \in \F_{q^{2rt}}$ the map $\tau_v \colon \F_{q^{2t}} \rightarrow \F_{q^{2rt}}$ is as follows
\[\lambda=x \omega +y \mapsto v\lambda=x v_0 A_1 + y v_1 + \omega (x v_1 + y v_0 + x v_0 A_0),\]
and $\tau_v$ can be viewed as a function defined on $\F_{q^t}\times\F_{q^t}$.
Then the associated MRD-code consists of the maps $F_v=G\circ \tau_v$, i.e.
\[ F_v \colon (x,y)\in \F_{q^t}\times \F_{q^t} \mapsto f(x(v_1+v_0A_0)+yv_0)- x v_0 A_1 - y v_1.\]

\smallskip

For instance, put $f(x):=ax^{q^i}$,  $a\in\F_{q^{rt}}^*$, $1\leq i\leq rt-1$, $r$ odd.
For any $q\geq 2$ and any integer $t\geq 2$ with $\gcd(t,r)=1$, such that
\begin{enumerate}[(i)]
\item $\gcd(i,2t)=1$ and $\gcd(i,rt)=r$,
\item $\N_{q^{rt}/q^r}(a)\notin\F_q$,
\end{enumerate}
from Section \ref{sec:scattered} (see \cite[Theorem 2.2]{BGMP2015}), we get the $\F_q$-linear MRD-code with dimension $2rt$ and parameters $(rt,2t,q;2t-1)$:
\[ \{ F_v \colon v=\omega v_0+v_1,\, v_0, v_1 \in \F_{q^{rt}} \}, \]
where $F_v \colon \F_{q^t}\times\F_{q^t}\rightarrow\F_{q^{rt}}$ is defined by the rule
\begin{equation}\label{for:cod1}
F_v(x,y)=x^{q^i}a(A_0^{q^i}v_0^{q^i}+v_1^{q^i})-xA_1v_0+y^{q^i}av_0^{q^i}-yv_1.
\end{equation}
Note that, since $\gcd(i,rt)=r$, the above MRD-code is $\F_{q^r}$-linear as well, since for each $\mu\in\F_{q^r}$ and $v\in\F_{q^{2rt}}$ we have $\mu F_v=F_{\mu v}$.
Hence its left idealiser contains a subfield isomorphic to $\F_{q^r}$.

\subsection{An alternative proof of Blokhuis-Lavrauw's bound}\label{BLbound}

The proof of Theorem \ref{boundscattered} relies on the classical technique of double counting.
Here, by using the above methods (in particular the Singleton bound \eqref{eq:SingletonBound}), we can give an alternative proof of this bound.

\proof{(\emph{Blokhuis-Lavrauw's bound})}\label{BLbound}\\
\noindent Let $U$ be a scattered $\F_q$-subspace of dimension $k$ of $V=V(r,q^n)$, $r \geq 2$.
Clearly, $k \leq rn-n$, since $\displaystyle |L_U|=\frac{q^k-1}{q-1} \leq \frac{q^{rn}-1}{q^n-1}$.\\
\noindent Consider $W=V(rn-k,q)$ and
\[ G\colon V \rightarrow W\]
any surjective $\F_q$-linear map with $\ker G=U$. Consider the code
\[ \C=\mathcal{C}_{U,G}=\{G \circ \tau_{\mathbf{v}} \colon \mathbf{v} \in V\}. \]
The maps $G \circ \tau_{\mathbf{v}}$ of $\C$ are $\F_q$-linear maps from $\F_{q^n}$ to $W=V(rn-k,q)$ with kernel of dimension at most one.
Indeed, $\mu \in \ker (G \circ \tau_\mathbf{v})$ if and only if $G(\mu \mathbf{v})=0$, i.e. $\mu \mathbf{v} \in \ker G=U$; hence $\dim_{\F_q} \ker (G \circ \tau_\mathbf{v}) = \dim_{\F_q} (U \cap \langle \mathbf{v} \rangle_{\F_{q^n}})$
and $\dim_{\F_q} (U \cap \langle \mathbf{v} \rangle_{\F_{q^n}})\leq 1$ since $U$ is scattered.
Hence, the rank of $G \circ \tau_{\mathbf{v}}$ is greater than or equal to $n-1$, and so $\C$ is an $\F_q$-linear RM-code with parameters $(rn-k,n,q;n-1)$ where $rn-k \geq n$.
Also, since $U$ is scattered, $G\circ \tau_{\mathbf{v}}$ and $G\circ \tau_{\mathbf{w}}$ are equal if and only if $\mathbf{v}=\mathbf{w}$ and hence $|\C|=q^{nr}$.
Therefore, by the Singleton bound it follows that
\[nr \leq 2 (nr-k),\]
and hence $\displaystyle k \leq \frac{rn}2$, and the claim is proved.
\qed

\bigskip

\noindent Olga Polverino and Ferdinando Zullo\\
Dipartimento di Matematica e Fisica,\\
Universit\`a degli Studi della Campania ``Luigi Vanvitelli'',\\
I--\,81100 Caserta, Italy\\
{{\em \{olga.polverino,ferdinando.zullo\}@unicampania.it}}


\begin{thebibliography}{100}

\bibitem{Albert}
{\sc A.A. Albert:}
Generalized twisted fields,
\emph{Pacific J. Math.} {\bf 11} (1961), 1-–8.

\bibitem{BBL2000}
{\sc S. Ball, A. Blokhuis and M. Lavrauw:}
Linear $(q+1)$-fold blocking sets in $\PG(2,q^4)$,
\emph{Finite Fields Appl.} {\bf 6 (4)} (2000), 294--301.

\bibitem{BGMP2015}
{\sc D. Bartoli, M. Giulietti, G. Marino and O. Polverino:}
Maximum scattered linear sets and complete caps in Galois spaces,
\emph{Combinatorica} {\bf 38(2)} (2018), 255--278.

\bibitem{BZZ}
{\sc D. Bartoli, C. Zanella and F. Zullo: }
A new family of maximum scattered linear sets in $\PG(1,q^6)$, \href{https://arxiv.org/abs/1910.02278}{arXiv:1910.02278}.

\bibitem{Berger}
{\sc T. Berger:}
Isometries for rank distance and permutation group of Gabidulin codes,
\emph{IEEE Trans. Inform. Theory} {\bf 49} (2003), 3016--3019.

\bibitem{BL2000}
{\sc A. Blokhuis and M. Lavrauw:}
Scattered spaces with respect to a spread in $\mathrm{PG}(n,q)$,
\emph{Geom.\ Dedicata} {\bf 81} (2000), 231--243.

\bibitem{ByrneRavagnani}
{\sc E. Byrne and A. Ravagnani:}
Covering Radius of Matrix Codes Endowed with the Rank Metric,
\emph{SIAM J. Discrete Math.} {\bf 31} (2017), 927--944.

\bibitem{CossMP}
{\sc A. Cossidente, G. Marino and F. Pavese:}
Non-linear maximum rank distance codes,
\emph{Des. Codes Cryptogr.} {\bf 79(3)} (2016), 597--609.

\bibitem{CsMP}
{\sc B. Csajb\'ok, G. Marino and O. Polverino:}
{Classes and equivalence of linear sets in $\mathrm{PG}(1,q^n)$},
\emph{J. Combin. Theory Ser. A }{\bf 157} (2018), 402--426.

\bibitem{CsMP2018}
{\sc B. Csajb\'ok, G. Marino and O. Polverino:}
{A Carlitz type result for linearized polynomials},
\emph{Ars Math. Contemp.} {\bf 16(2)}(2019), 585--608.

\bibitem{CMPZ}
{\sc B. Csajb\'ok, G. Marino, O. Polverino and C. Zanella:}
A new family of MRD-codes,
\emph{Linear Algebra Appl.} {\bf 548} (2018), 203--220.

\bibitem{CsMPZh}
{\sc B. Csajb\'ok, G. Marino, O. Polverino and Y. Zhou:}
Maximum Rank-Distance codes with maximum left and right idealisers,
\href{https://arxiv.org/abs/1807.08774}{https://arxiv.org/abs/1807.08774}.

\bibitem{CSMPZ2016}
{\sc B. Csajb\'ok, G. Marino, O. Polverino and F. Zullo:}
Maximum scattered linear sets and MRD-codes,
{\it J.\ Algebraic\ Combin.} {\bf 46} (2017), 1--15.

\bibitem{CsMPZ2019}
{\sc B. Csajb\'ok, G. Marino, O. Polverino and F. Zullo:}
A special class of scattered subspaces,
\href{https://arxiv.org/abs/1906.10590}{https://arxiv.org/abs/1906.10590}.

\bibitem{CsMZ2018}
{\sc B. Csajb\'ok, G. Marino and F. Zullo:}
New maximum scattered linear sets of the projective line,
\emph{Finite Fields Appl.} {\bf 54} (2018), 133--150.

\bibitem{CsS}
{\sc B. Csajb\'ok and A. Siciliano:}
Puncturing maximum rank distance codes,
\emph{J. Algebraic Combin.} (2018), 1--28.

\bibitem{CsZ2016}
{\sc B. Csajb\'ok and C. Zanella:}
On the equivalence of linear sets,
\emph{Des. Codes Cryptogr.} {\bf 81} (2016), 269--281.

\bibitem{CsZ20162}
{\sc B. Csajb\'ok and C. Zanella:}
On scattered linear sets of pseudoregulus type in $\PG(1,q^t)$,
{\it Finite Fields Appl.} {\bf 41} (2016), 34--54.

\bibitem{delaCruz}
{J. de la Cruz, M. Kiermaier, A. Wassermann and W. Willems:}
Algebraic structures of MRD Codes,
{\it Adv. Math. Commun.} {\bf 10} (2016), 499--510.

\bibitem{Delsarte}
{\sc P. Delsarte:}
Bilinear forms over a finite field, with applications to coding theory,
{\it J.\ Combin.\ Theory Ser.\ A} {\bf 25} (1978), 226--241.

\bibitem{Dembowski}
{\sc P. Dembowski:}
Finite Geomtries,
\emph{Springer}, 1968.

\bibitem{DD2014}
{\sc G. Donati and N. Durante:}
Scattered linear sets generated by collineations between pencils of lines,
\emph{J. Algebr. Comb.} {\bf 40} (2014), 1121-–1134.

\bibitem{DonDur}
{\sc G. Donati and N. Durante:}
A generalization of the normal rational curve in $\PG(d,q^n)$ and its associated non-linear MRD codes,
\emph{Des. Codes Cryptogr.} {\bf 86(6)} (2018), 1175--1184.

\bibitem{DurSic}
{\sc N. Durante and A. Siciliano:}
Non-linear maximum rank distance codes in the cyclic model for the field reduction of finite geometries,
\emph{Electron. J. Combin.} {\bf 24(2)} (2017), 2--33.

\bibitem{Free}
{\sc J.W. Freeman:}
Reguli and pseudo-reguli in $\PG(3,s^2)$,
\emph{Geom. Dedicata} {\bf 9} (1980), 267–-280.

\bibitem{Gabidulin}
{\sc E. Gabidulin:}
Theory of codes with maximum rank distance,
\emph{Problems of information transmission}, {\bf 21(3)} (1985), 3--16.

\bibitem{GiuZ}
{\sc L. Giuzzi and F. Zullo:}
Identifiers for MRD-codes,
\emph{Linear Algebra Appl.} \textbf{575} (2019), 66--86.

\bibitem{H-TM}
{\sc A. Horlemann-Trautmann and K. Marshall:}
New criteria for MRD and Gabidulin codes and some rank-metric code constructions,
{\it Adv. Math. Commun.} {\bf 11(3)} (2017), 533--548.

\bibitem{kshevetskiy_new_2005}
{\sc A.~Kshevetskiy and E.~Gabidulin:}
The new construction of rank codes,
\emph{International {Symposium} on {Information} {Theory}}, 2005. {ISIT} 2005. {Proceedings}, pages 2105--2108, Sept. 2005.

\bibitem{LavrauwThesis}
{\sc M. Lavrauw:}
Scattered spaces with respect to Spreads, and Eggs in Finite Projective Spaces,
{\it Ph.D thesis}, Eindhoven University of Technology (2001).

\bibitem{Lavrauw}
{\sc M. Lavrauw:}
Scattered spaces in Galois Geometry,
{\it Contemporary Developments in Finite Fields and Applications} (2016), 195--216.

\bibitem{LavrauwPolverino}
{\sc M. Lavrauw and O. Polverino:}
Finite Semifields,
Chapter in \emph{Current reserch topics in Galois geometry} (Eds. J. de Beule and L. Storme), NOVA Academic Publishers, New York 2011.

\bibitem{LSZ}
{\sc M. Lavrauw, J. Sheekey and C. Zanella:}
On embeddings of minimum dimension of $\PG(n,q)\times\PG(n,q)$,
\emph{Des. Codes Cryptogr.} {\bf 74(2)} (2015), 427-–440.

\bibitem{LVdV2010}
{\sc M. Lavrauw and G. Van de Voorde:}
On linear sets on a projective line,
\emph{Des. Codes Cryptogr.} {\bf 56} (2010), 89-–104.

\bibitem{LVdV2013}
{\sc M. Lavrauw and G. Van de Voorde:}
Scattered linear sets and pseudoreguli,
\emph{Electron. J. Combin.} {\bf 20(1)} (2013).

\bibitem{LVdV2015}
{\sc M. Lavrauw and G. Van de Voorde:}
Field reduction and linear sets in finite geometry,
In: \emph{Topics in Finite Fields}, AMS Contemporary Math, vol. 623, pp. 271-–293. American Mathematical Society, Providence (2015).

\bibitem{lidl_finite_1997}
{\sc R.~Lidl and H.~Niederreiter:}
Finite fields, volume~20 of {\em Encyclopedia of Mathematics and its Applications},
Cambridge University Press, Cambridge, second edition, 1997.

\bibitem{LN2016}
{\sc D. Liebhold and G. Nebe}:
Automorphism groups of Gabidulin-like codes,
{\it Arch. Math.} {\bf107(4)} (2016), 355--366.

\bibitem{Lu}
{\sc G. Lunardon:}
Linear $k$-blocking sets,
{\it Combinatorica} {\bf 21(4)} (2001), 571--581.

\bibitem{Lunardon2017}
{\sc G. Lunardon:}
MRD-codes and linear sets,
{\it J. Combin. Theory Ser. A} {\bf 149} (2017), 1--20.

\bibitem{LuMaPoTr2014}
{\sc G. Lunardon, G. Marino, O. Polverino and R. Trombetti:}
Maximum scattered linear sets of pseudoregulus type and the Segre Variety ${\cal S}_{n,n}$,
\emph{J.\ Algebraic.\ Combin.} \textbf{39} (2014), 807--831.

\bibitem{LP:04}
{\sc G. Lunardon and O. Polverino:}
{\rm Translation ovoids of orthogonal polar spaces},
{\it Forum Math.}\, {\bf 16}, 663--669 (2004).

\bibitem{LunPol2001}
{\sc G. Lunardon and O. Polverino:}
Blocking sets and derivable partial spreads,
\emph{J. Algebraic Combin.} {\bf 14} (2001), 49--56.

\bibitem{LTZ2}
{\sc G. Lunardon, R. Trombetti and Y. Zhou:}
On kernels and nuclei of rank metric codes,
{\it J. Algebraic Combin.} {\bf 46} (2017), 313--340.

\bibitem{LTZ}
{\sc G. Lunardon, R. Trombetti and Y. Zhou:}
Generalized Twisted Gabidulin Codes,
\emph{J. Combin. Theory Ser. A} {\bf 159} (2018), 79--106.

\bibitem{MMZ}
{\sc G. Marino, M. Montanucci and F. Zullo:}
MRD-codes arising from the trinomial $x^q+x^{q^3}+cx^{q^5}\in\F_{q^6}[x]$,
\emph{Linear Algebra Appl.} \textbf{591} (2020), 99--114.

\bibitem{MPT2007}
{\sc G. Marino, O. Polverino and R. Trombetti:}
On $\F_q$-linear sets of $\PG(3,q^3)$ and semifields,
\emph{J. Combin. Theory Ser. A} {\bf 114} (2007), 769--788.

\bibitem{Martinez}
{\sc U. Mart\'inez-Pe\~nas:}
On the Similarities Between Generalized Rank and Hamming Weights and Their Applications to Network Coding,
\emph{IEEE Trans. Inform. Th.} {\bf 62} (2016), 4081--4095.

\bibitem{Morrison}
{\sc K.~Morrison:}
Equivalence for rank-metric and matrix codes and automorphism groups of gabidulin codes,
{\it {IEEE} Trans. Inform. Theory} {\bf 60(11)} (2014), 7035--7046.

\bibitem{OzbudakOtal}
{\sc K. Otal and F. \"Ozbudak:}
Additive rank-metric codes,
{\it {IEEE} Trans. Inform. Theory} {\bf 63} (2017), 164--168.

\bibitem{OzbudakOtal2018}
{\sc K.~Otal and F.~\"Ozbudak: }
Some new non-additive maximum rank distance codes,
{\it Finite Fields Appl.} {\bf 50} (2018), 293--303.

\bibitem{Polverino}
{\sc O. Polverino:}
Linear sets in finite projective spaces,
\emph{Discrete Math.} {\bf 310(22)} (2010), 3096--3107.

\bibitem{PolverinoZullo2019}
{\sc O. Polverino and F. Zullo:}
On the number of roots of some linearized polynomials,
\href{https://arxiv.org/abs/1909.00802}{https://arxiv.org/abs/1909.00802}.

\bibitem{Puch}
{\sc S. Puchinger, J. Rosenkilde n\'e Nielsen and J. Sheekey:}
Further Generalisations of Twisted Gabidulin Codes,
\emph{Tenth International Workshop on Coding and Cryptography} 2017.

\bibitem{Ravagnani}
{\sc A. Ravagnani:}
Rank-metric codes and their duality theory,
{\it Des. Codes Cryptogr.} {\bf 80(1)} (2016), 197--216.

\bibitem{SchmidtZhou}
{\sc K.U. Schmidt and Y. Zhou:}
On the number of inequivalent MRD codes,
{\it Des. Codes Cryptogr.} {\bf 86(9)} (2018), 1973--1982.

\bibitem{Sheekey}
{\sc J. Sheekey:}
A new family of linear maximum rank distance codes,
{\it Adv. Math. Commun.} {\bf 10(3)} (2016), 475--488.

\bibitem{Sheekey2018}
{\sc J. Sheekey:}
New Semifields and new MRD Codes from Skew Polynomial Rings,
{\it J. Lond. Math. Soc. (2)} (2019), 1--25.

\bibitem{sheekey_newest_preprint}
{\sc J. Sheekey:}
MRD Codes: Constructions and Connections,
\emph{Combinatorics and finite fields: Difference sets, polynomials, pseudorandomness and applications}, Radon Series on Computational and
Applied Mathematics, K.-U. Schmidt and A. Winterhof (eds.).

\bibitem{ShVdV}
{\sc J. Sheekey and G. Van de Voorde:}
Rank-metric codes, linear sets and their duality,
{\it Des. Codes Cryptogr.} (2019), \href{https://doi.org/10.1007/s10623-019-00703-z}{https://doi.org/10.1007/s10623-019-00703-z}.

\bibitem{TZ}
{\sc R. Trombetti and Y. Zhou:}
A new family of MRD codes in $\F_q^{2n\times 2n}$ with right and middle nuclei $\F_{q^n}$,
{\em IEEE Trans. Inform. Theory} {\bf 65(2)} (2019), 1054--1062.

\bibitem{Wan}
{\sc Z. X. Wan:}
Geometry of Matrices,
\emph{World Scientific} (1996).

\bibitem{ZZ}
{\sc C. Zanella and F. Zullo:}
Vertex properties of maximum scattered linear sets of $\PG(1,q^n)$,
\emph{Discrete Math.} {\bf 343(5)} (2020).

\bibitem{PhDthesis}
{\sc F. Zullo:}
Linear codes and Galois geometries: between two worlds,
\emph{Ph.D thesis}, Universit\`a degli Studi della Campania ``\emph{Luigi Vanvitelli}''.

\end{thebibliography}
\end{document}